\newtheorem{theorem}{Theorem}[section]
\newtheorem{lemma}[theorem]{Lemma}
\newtheorem{question}[theorem]{Question}
\theoremstyle{definition}
\newtheorem{definition}[theorem]{Definition}
\newtheorem{proposition}[theorem]{Proposition}
\theoremstyle{remark}
\begin{document}

\title
{Locally upper bounded poset-valued maps and stratifiable spaces}

\thanks{}

\author{Ying-Ying Jin}
\address{(Y.Y. Jin)
School of Mathematics and Computational Science, Wuyi University, Jiangmen 529020, P.R. China}
\email{yingyjin@163.com}

\author{Li-Hong Xie}
\address{(L.H. Xie) School of Mathematics and Computational Science, Wuyi University, Jiangmen 529020, P.R. China}
\email{yunli198282@126.com}

\author{Han-Biao Yang$^*$}
\address{(H.B. Yang) School of Mathematics and Computational Science, Wuyi University, Jiangmen 529020, P.R. China}
\email{596283897@qq.com}

\thanks{* The corresponding author}

\thanks{$^1$Supported by NSFC (Nos. 11526158, 11601393).}
\subjclass[2000]{54D20; 54D40; 54D45; 54E18; 54E35; 54H11}

\keywords{Locally upper bounded poset-valued maps; Stratifiable spaces; Semi-stratifiable spaces; MCP;
MCM; Lower semi-continuous (l.s.c.); Upper semi-continuous (u.s.c.).}

\begin{abstract}
In this paper, we characterize stratifiable (or semi-stratifiable) spaces, and monotonically countably paracompact (or monotonically countably metacompact) spaces by
expansions of locally upper bounded semi-continuous poset-valued maps.
These extend earlier results for real-valued Locally bounded functions.
\end{abstract}

 \maketitle

\section{Introduction}
Throughout this paper, let $\mathbb{R}$ the set of all real numbers, and $\mathbb{N}$ set of all natural numbers. All topological spaces are assumed to be $T_1-$spaces.

J. Mack characterized \cite{JM} countably paracompact spaces with
locally bounded real-valued functions as follows:

\begin{theorem}(\cite{JM})
A space $X$ is countably paracompact if and only if for each locally bounded
function $h:X \rightarrow\mathbb{R} $ there exists a locally bounded l.s.c. function $g:X \rightarrow \mathbb{R}$ such that $|h|\leq g$.
\end{theorem}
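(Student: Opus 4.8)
The plan is to prove both directions through the classical ``shrinking'' characterization of countable paracompactness: a space $X$ is countably paracompact if and only if every countable increasing open cover $\{O_n\}_{n\in\mathbb{N}}$ of $X$ admits an increasing sequence $\{H_n\}$ of closed sets with $H_n\subseteq O_n$ for all $n$ and $X=\bigcup_n\operatorname{int}(H_n)$ (a standard equivalent, via Ishikawa's theorem, of the condition on decreasing closed sequences). The bridge between this and the statement is a simple dictionary for a finite, nonnegative, integer-valued function $g$ on $X$: such a $g$ is lower semicontinuous exactly when every sublevel set $\{x:g(x)\le n\}$ is closed, and it is locally bounded exactly when $X=\bigcup_n\operatorname{int}\{x:g(x)\le n\}$. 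I would set this up first and then read off each implication.

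For necessity, start from a locally bounded $h:X\to\mathbb{R}$ and put $O_n=\bigcup\{U:U\text{ open},\ \sup_U|h|\le n\}$. Local boundedness of $h$ makes $\{O_n\}$ an increasing open cover, and by construction $\sup_{O_n}|h|\le n$. Apply the shrinking characterization to get increasing closed $H_n\subseteq O_n$ with $\bigcup_n\operatorname{int}(H_n)=X$, and then define $g(x)=\min\{n:x\in H_n\}$. The verification is now mechanical: $g$ is finite because the $H_n$ cover $X$; it is l.s.c.\ because $\{g\le n\}=H_n$ is closed; it is locally bounded because $\operatorname{int}(H_n)\subseteq\{g\le n\}$ and these interiors cover $X$; and $|h(x)|\le n=g(x)$ whenever $g(x)=n$, since then $x\in H_n\subseteq O_n$. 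Hence $|h|\le g$.

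For sufficiency, take an arbitrary increasing open cover $\{O_n\}$ and manufacture the closed shrinking from the hypothesis. Define $h(x)=\min\{n:x\in O_n\}$, which is locally bounded since $h\le n$ on the open set $O_n$ and $\bigcup_n O_n=X$. Pick a locally bounded l.s.c.\ $g$ with $|h|\le g$ and set $H_n=\{x:g(x)\le n\}$. Then the $H_n$ are closed (by l.s.c.), increasing, and cover $X$ (as $g$ is finite), the interiors $\operatorname{int}(H_n)$ cover $X$ (as $g$ is locally bounded), and $H_n\subseteq O_n$ because $g(x)\le n$ forces $h(x)\le n$ and hence $x\in O_n$. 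That is precisely the required shrinking, so $X$ is countably paracompact.

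The routine parts are the level-set dictionary and the two translations above; the only genuine external input is the shrinking (Ishikawa) characterization of countable paracompactness. The one point that needs care — and the place where a naive attempt goes wrong — is the choice of semicontinuity type: one must arrange that it is the sublevel sets $\{g\le n\}$, not the superlevel sets, that are closed, which is exactly why lower semicontinuity is the right hypothesis and why $g$ is built from a ``$\min$'' over a closed shrinking rather than from suprema of $|h|$ over neighborhoods.
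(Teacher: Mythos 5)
Your proof is correct: both directions are verified cleanly via the Ishikawa-type shrinking characterization together with the sublevel-set dictionary for integer-valued $g$, and the one delicate point (that lower semicontinuity is what makes $\{g\le n\}$ closed) is handled properly. The paper itself only cites this theorem from Mack without proof, but your argument is exactly the real-valued specialization (with $y_n=n$) of the template the paper uses for its own generalizations (Theorems 3.3 and 4.1): pass to the increasing open cover $\operatorname{Int}\{g\le y_n\}$, shrink it to closed sets whose interiors cover, and read off a step function.
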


C.R. Borges \cite{Bo} introduced definitions called stratifiable spaces and semi-stratifiable spaces.

\begin{definition}\cite{Bo}
A space $X$ is said to be stratifiable if, to
each open set $U$, one can assign an increasing sequence $(U_n)_{n\in\mathbb{N}}$, called a
stratification of $X$, of open subsets of $X$ such that
\begin{enumerate}
  \item $ \overline{U_n}\subseteq U$ for each $n\in\mathbb{N}$;
  \item $\bigcup_{n\in\mathbb{N}}U_n=U$;
  \item if $ U\subseteq V$, then $  U_n\subseteq V_n$ for each $n\in\mathbb{N}$.
\end{enumerate}
$X$ is said to be semi-stratifiable, if to each open set $U$, one can assign a sequence of closed subsets $(U_n)_{n\in\mathbb{N}}$
such that (2) and (3) above hold.
\end{definition}

Recall that a space $X$ is said to be {\it perfect} \cite{En} if to each open set $U$ of $X$, one can assign an increasing sequence of closed subsets $(U_n)_{n\in\mathbb{N}}$ such that (2) above holds.
A perfect space $X$ is said to be {\it perfectly normal} if $X$ is normal.

It is well known that a space is stratifiable if and only if it is monotonically normal and semi-stratifiable.

C. Good, R. Knight and I. Stares \cite{GK} and C. Pan \cite{Pa} introduced a monotone version of countably paracompact spaces, called monotonically countably paracompact spaces (MCP) and monotonically cp-spaces, respectively, and it was
proved in \cite[Proposition 14]{GK} that both these notions are equivalent.

\begin{definition}\cite{GK}\label{def2.4}
A space $X$ is said to be monotonically countably metacompact (MCM) if there is an operator $U$ assigning to each decreasing sequence $(D_j)_{j\in\mathbb{N}}$
of closed sets with empty intersection, a sequence of open sets $U((D_j))=(U(n,(D_j)))_{n\in\mathbb{N}}$ such that
 \begin{enumerate}
  \item $D_n \subseteq U(n,(D_j))$ for each $n\in\mathbb{N}$;
  \item $\bigcap_{n\in\mathbb{N} }U(n,(D_j))=\emptyset$;
  \item given two decreasing sequences of closed sets $(F_j)_{j\in \mathbb{N}}$ and $(E_j)_{j\in \mathbb{N}}$ such that $F_n \subseteq E_n$ for each $n\in\mathbb{N}$, then $U(n,(F_j))\subseteq U(n,(E_j))$ for each $n\in \mathbb{N}$.
\end{enumerate}
$X$ is said to be monotonically countably paracompact (MCP) if, in addition,

$(2') \bigcap_{n\in \mathbb{N}}\overline{U(n,(D_j))}=\emptyset$.
 \end{definition}

Many insertion results present some classic characterizations of topological spaces, such as stratifiable spaces, monotonically countably paracompact spaces and others.
T. Kubiak \cite{TK} investigated monotonically normal spaces by the monotonization of insertion properties.
P. Nyikos and C. Pan \cite{Pa} and C. Good and I. Stares \cite{GS} respectively gave a characterization of stratifiable spaces by the monotonizations of insertion properties.
Also, C. Good, R. Knight and I. Stares \cite{GK} characterized monotonically countably paracompact spaces by the insertions of semi-continuous functions.

By extending the insertion properties of real-valued maps, K. Yamazaki \cite{KY} introduced the notion of local boundedness for set-valued mappings and described MCP spaces by expansions of locally bounded set-valued mappings.
L.H. Xie, P.F. Yan\cite{XH} gave some characterizations of stratifiable, semi-stratifiable by expansions of set-valued mappings.
K. Yamazaki \cite{Ya}, Y.Y. Jin, L.H. Xie, H.W. Yue \cite{JX} considered the locally upper bounded maps with values in the ordered topological vector spaces and provided new monotone insertion theorems.

The following theorems were proved in \cite[Theorem 2.4]{KY}, \cite[Theorem 3.1 and Theorem 3.2]{Ya} and \cite[Theorem 3.1 and Theorem 3.2]{XH}.

\begin{theorem}(\cite{KY})
For a space $X$, the
following statements are equivalent:

\begin{enumerate}
\item $X$ is MCP (resp. MCM);

\item for every metric space $Y$, there exists a preserved order operator $\Phi$ assigning to each
locally bounded set-valued mapping $\varphi: X \rightarrow \mathcal {B}(Y)$, a locally bounded l.s.c. (resp. a l.s.c.) set-valued mapping $\Phi(\varphi): X \rightarrow \mathcal {B}(Y)$ such that $\varphi\subseteq \Phi(\varphi)$;

\end{enumerate}
where $\mathcal {B}(Y)$ is the set of all nonempty closed bounded sets of $Y$.
\end{theorem}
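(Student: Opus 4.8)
The plan is to establish the two implications $(1)\Rightarrow(2)$ and $(2)\Rightarrow(1)$ by translating between $\mathcal{B}(Y)$-valued maps and scalar functions, so that the whole argument is governed by the operator of Definition \ref{def2.4} acting on decreasing sequences of closed sets with empty intersection. Throughout we may assume $Y\neq\emptyset$ (otherwise $\mathcal{B}(Y)=\emptyset$ and there is nothing to prove) and fix a point $y_0\in Y$.

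For $(1)\Rightarrow(2)$, first I would collapse the set-valued data to a scalar one: for a locally bounded $\varphi:X\to\mathcal{B}(Y)$ put $h_\varphi(x)=\bigl\lceil\sup\{d(y_0,a):a\in\varphi(x)\}\bigr\rceil+1$. Since each $\varphi(x)$ is bounded and $\varphi$ is locally bounded, $h_\varphi$ is a locally bounded map $X\to\mathbb{N}$, it is order-preserving in $\varphi$, and $\varphi(x)\subseteq\overline{B}(y_0,h_\varphi(x))$. Next I would extract from the MCM (resp.\ MCP) operator a monotone rule assigning to each locally bounded $h:X\to\mathbb{N}$ a finite integer-valued function $g\ge h$ with every set $\{x:g(x)>n\}$ open (resp.\ and with $g$ locally bounded): set $D_j=\overline{\{x:h(x)\ge j\}}$, which is a decreasing sequence of closed sets with empty intersection precisely because $h$ is locally bounded; apply the operator; replace $U(n,(D_j))$ by the decreasing sequence $U^{*}(n)=\bigcap_{k\le n}U(k,(D_j))$; and let $g(x)=\max\{n:x\in U^{*}(n)\}$ (with $g(x)=0$ if $x\notin U^{*}(1)$). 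Conditions (1), (2), (3) of the operator translate respectively into $h\le g$, finiteness of $g$, and the monotonicity $h\le h'\Rightarrow g\le g'$ (using that closure preserves inclusions and that $\{h\ge j\}\subseteq\{h'\ge j\}$), while $(2')$ gives local boundedness of $g$ in the MCP case. Finally I would put $\Phi(\varphi)(x)=\overline{B}\bigl(y_0,g(x)\bigr)$ with $g$ the expansion of $h_\varphi$ just produced; then $\varphi\subseteq\Phi(\varphi)$, the order is preserved, and because $g$ is integer-valued one has $\{x:\Phi(\varphi)(x)\cap W\neq\emptyset\}=\{x:g(x)\ge n_0(W)\}$ for the integer $n_0(W)=\min\{n:\overline{B}(y_0,n)\cap W\neq\emptyset\}$, so this set is open and $\Phi(\varphi)$ is lower semicontinuous; in the MCP case $\bigcup_{z\in V}\Phi(\varphi)(z)\subseteq\overline{B}(y_0,\sup_{z\in V}g(z))$ shows that $\Phi(\varphi)$ is also locally bounded.

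For $(2)\Rightarrow(1)$, I would apply (2) to the single metric space $Y=\mathbb{R}$. Given a decreasing sequence $(D_j)$ of closed subsets of $X$ with $\bigcap_{j}D_j=\emptyset$, set $h(x)=\max\{j:x\in D_j\}$ (and $h(x)=0$ if $x\notin D_1$); then $h$ is finite and locally bounded, since for $x\notin D_{j_0}$ the open set $X\setminus D_{j_0}$ is a neighbourhood of $x$ on which $h<j_0$. Put $\varphi_{(D_j)}(x)=[0,h(x)+1]\in\mathcal{B}(\mathbb{R})$, which is a locally bounded, order-preserving assignment, apply $\Phi$, let $g(x)=\sup\Phi(\varphi_{(D_j)})(x)$ (finite, since $\Phi(\varphi_{(D_j)})(x)$ is bounded), and define $U(n,(D_j))=\{x:g(x)>n\}$. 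Lower semicontinuity of $\Phi(\varphi_{(D_j)})$ makes these sets open; $x\in D_n$ forces $h(x)+1\in\varphi_{(D_j)}(x)\subseteq\Phi(\varphi_{(D_j)})(x)$ and hence $g(x)>n$, so $D_n\subseteq U(n,(D_j))$; finiteness of $g$ gives $\bigcap_n U(n,(D_j))=\emptyset$; and if $F_n\subseteq E_n$ for all $n$ then $\varphi_{(F_j)}\subseteq\varphi_{(E_j)}$, so $\Phi(\varphi_{(F_j)})\subseteq\Phi(\varphi_{(E_j)})$ by order preservation, so $g_{(F_j)}\le g_{(E_j)}$, which is the monotonicity of $U$. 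Hence $X$ is MCM. If moreover $\Phi(\varphi_{(D_j)})$ is locally bounded, each $x$ has an open neighbourhood $V$ with $\sup_{z\in V}g(z)\le M<\infty$, whence $V\cap U(M+1,(D_j))=\emptyset$ and $x\notin\overline{U(M+1,(D_j))}$; thus $\bigcap_n\overline{U(n,(D_j))}=\emptyset$ and $X$ is MCP.

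I expect the main obstacle to be the bookkeeping in $(1)\Rightarrow(2)$: the scalar operator must be arranged so as to return a \emph{finite} integer-valued l.s.c.\ function, because $\overline{B}(y_0,g(x))$ has to lie in $\mathcal{B}(Y)$ and it is exactly the integrality of $g$ that keeps $x\mapsto\overline{B}(y_0,g(x))$ lower semicontinuous; for a genuinely real-valued $g$ this map would fail to be l.s.c.\ at points where $\inf\{d(y_0,w):w\in W\}$ is attained in $W$. A secondary point needing care is that the passage from $h$ to $(D_j)$ via closures must simultaneously annihilate the intersection and preserve the order $h\le h'$; the former is precisely where local boundedness of $h$ enters.
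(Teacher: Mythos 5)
Your proof is correct and follows essentially the standard route: the paper only cites this theorem from \cite{KY}, but both Yamazaki's original argument and the paper's own poset-valued analogues (Theorems \ref{the3.3} and \ref{the3.4}, via Lemma \ref{lem2.5}) proceed exactly as you do --- collapse $\varphi$ to an integer-valued radius function, feed the associated decreasing closed sets (equivalently, the complementary increasing open cover) to the MCP/MCM operator, and expand back to the closed balls $\overline{B}(y_0,g(x))$. Your closing remark that it is precisely the integrality of $g$ that makes $x\mapsto\overline{B}(y_0,g(x))$ lower semi-continuous (and that local boundedness of $h_\varphi$ is what forces $\bigcap_j D_j=\emptyset$) identifies the right points of care, and both are handled correctly.
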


\begin{theorem}(\cite{Ya})
Let $X$ be a topological space and $Y$ an ordered topological vector space with a positive interior point.
Then, the following conditions are equivalent:
\begin{enumerate}
\item $X$ is MCP (resp. MCM).
\item There exists an operator $\Phi$ assigning to each locally upper bounded map $f:X\rightarrow Y$, a locally upper bounded lower semi-continuous (resp. a lower semi-continuous) map $\Phi(f):X\rightarrow Y$ with $f\leq\Phi(f)$ such that $\Phi(f)\leq\Phi(f')$ whenever $f\leq f'$.

\end{enumerate}
\end{theorem}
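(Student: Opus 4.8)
The plan is to prove the two implications separately, using the positive interior point $e\in Y$ (a point in the interior of the positive cone $C=\{y\in Y:y\ge 0\}$) as a ``scale'' that converts the order data carried by a map into the integer-indexed families appearing in Definition~\ref{def2.4}, and back. I will use two elementary facts about $e$: every $y\in Y$ satisfies $y\le ne$ for some $n\in\mathbb{N}$ (because $e+U\subseteq C$ for some neighbourhood $U$ of $0$ and $-y/n\to 0$), and $ne\not\le(n-1)e$ for every $n$ (since $-e\not\ge 0$). I will also use that $g\colon X\to Y$ is l.s.c.\ exactly when $\{x\in X:g(x)\not\le y\}$ is open for every $y\in Y$, so that sets such as $\{x:g(x)\not\le(n-1)e\}$ are open whenever $g$ is l.s.c.

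For $(1)\Rightarrow(2)$ I would begin with a locally upper bounded $f\colon X\to Y$ and attach to it, for each $n\in\mathbb{N}$, the set $F_n(f)$ of points $x$ admitting no neighbourhood $V$ with $f(V)\subseteq\{y\in Y:y\le ne\}$. These sets are closed, $F_1(f)\supseteq F_2(f)\supseteq\cdots$, and $\bigcap_n F_n(f)=\emptyset$ — the last assertion being precisely local upper boundedness, via the first fact about $e$ — and the assignment $f\mapsto(F_n(f))_n$ is order-preserving because $f\le f'$ and $f'(x')\le ne$ force $f(x')\le ne$. Feeding $(F_j(f))_j$ into the MCM (resp.\ MCP) operator of Definition~\ref{def2.4} produces open sets $W_n(f)=U(n,(F_j(f)))$ with $F_n(f)\subseteq W_n(f)$, $\bigcap_n W_n(f)=\emptyset$ (resp.\ and $\bigcap_n\overline{W_n(f)}=\emptyset$), and with $f\le f'$ implying $W_n(f)\subseteq W_n(f')$ for all $n$. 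I would then set $\Phi(f)(x)=k(x)\,e$ with $k(x)=\min\{n\in\mathbb{N}:x\notin W_n(f)\}$, and verify: $\Phi(f)$ is l.s.c.\ because $\{x:k(x)>n\}=\bigcap_{j\le n}W_j(f)$ is open and, for each $y$, $\{n:ne\not\le y\}$ is a final segment of $\mathbb{N}$, so $\{x:\Phi(f)(x)\not\le y\}$ is empty or of the form $\{x:k(x)>N\}$, hence open; $f\le\Phi(f)$ because $x\notin W_{k(x)}(f)\supseteq F_{k(x)}(f)$ gives a neighbourhood of $x$ on which $f\le k(x)e$; monotonicity of $\Phi$ is inherited from that of $f\mapsto(W_n(f))_n$; and in the MCP case local upper boundedness of $\Phi(f)$ follows from $\bigcap_n\overline{W_n(f)}=\emptyset$, since a neighbourhood of $x$ disjoint from some $W_n(f)$ forces $k\le n$, hence $\Phi(f)\le ne$, on it.

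For $(2)\Rightarrow(1)$ I would invert this. Given a decreasing sequence $(D_j)_j$ of closed sets with $\bigcap_j D_j=\emptyset$, put $m(x)=\max\{j\in\mathbb{N}:x\in D_j\}$, with $m(x)=0$ if $x\notin D_1$; this maximum exists because $x$ lies outside some $D_j$. The map $f_{(D_j)}(x)=m(x)\,e$ is locally upper bounded, since $\{y:m(y)\le n\}=X\setminus D_{n+1}$ is open and these sets cover $X$; and $(D_j)\mapsto f_{(D_j)}$ is order-preserving in the sense that $F_n\subseteq E_n$ for all $n$ forces $m_{(F_j)}\le m_{(E_j)}$ pointwise. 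Applying $\Phi$ yields a l.s.c.\ map $g_{(D_j)}=\Phi(f_{(D_j)})\ge f_{(D_j)}$, and I would define $U(n,(D_j))=\{x\in X:g_{(D_j)}(x)\not\le(n-1)e\}$. Then each $U(n,(D_j))$ is open; $D_n\subseteq U(n,(D_j))$ because $x\in D_n$ gives $g_{(D_j)}(x)\ge f_{(D_j)}(x)\ge ne\not\le(n-1)e$; $\bigcap_n U(n,(D_j))=\emptyset$ because each $g_{(D_j)}(x)$ satisfies $g_{(D_j)}(x)\le Ne$ for some $N$; and condition (3) of Definition~\ref{def2.4} follows from the monotonicity of $\Phi$. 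In the MCP case $g_{(D_j)}$ is moreover locally upper bounded, so each $x$ has a neighbourhood on which $g_{(D_j)}\le Ne$ and which is therefore disjoint from $U(N+1,(D_j))$; hence $\bigcap_n\overline{U(n,(D_j))}=\emptyset$, so $X$ is MCP.

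I expect the main difficulty to be bookkeeping rather than a single hard step: one must keep every link of the chain $f\mapsto(F_n(f))_n\mapsto(W_n(f))_n\mapsto\Phi(f)$ (and its reverse) strictly order-preserving, and one must use the positive interior point $e$ at exactly the right places to trade ``bounded above'' for ``$\le ne$ for some $n$'' and vice versa — this is the hypothesis on $Y$ that the proof actually consumes. With those in hand, the MCM/MCP dichotomy reduces to the single closure condition $(2')$, which corresponds to ``$\Phi(f)$ locally upper bounded'' on both sides of the equivalence and needs no further idea.
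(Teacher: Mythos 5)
Your proposal is correct and follows essentially the same route as the paper: this theorem is quoted from \cite{Ya} without proof here, but the paper's own proofs of its poset-valued analogues (Theorems \ref{the3.3} and \ref{the3.4}) use exactly your scheme of trading a locally upper bounded map for the family $\mathrm{Int}\{x: f(x)\le ne\}$, feeding it to the MCP/MCM operator, and reading off a step function valued in the chain $\{ne\}$, the only cosmetic difference being that the paper works with the De Morgan dual form (Lemma \ref{lem2.5}, increasing open covers and closed shrinkings) while you work with Definition \ref{def2.4} directly. The one phrase to soften is the claim that lower semi-continuity is \emph{equivalent} to openness of $\{x: g(x)\not\le y\}$ for all $y$ (only the forward implication is needed and true, given a closed positive cone); your constructed $\Phi(f)$ is l.s.c.\ anyway because it is $\ge \Phi(f)(x)$ on the neighbourhood $\bigcap_{j<k(x)}W_j(f)$ of each $x$.
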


\begin{theorem}(\cite{XH})
For a space $X$, the following statements are equivalent:

\begin{enumerate}
\item $X$ is perfectly normal (resp. stratifiable);

\item  for every space $Y$ having a strictly increasing closed cover $\{B_n\}$, there exists an operator $\Phi$ (resp. a preserved order operator $\Phi$) assigning to each
set-valued mapping $\varphi: X \rightarrow \mathcal {F}(Y)$, a l.s.c. set-valued mapping $\Phi(\varphi): X \rightarrow \mathcal {F}(Y)$ such that $\Phi(\varphi)$ is locally bounded at each $x\in U_\varphi$ and that $\varphi\subseteq \Phi(\varphi)$;
\end{enumerate}
\end{theorem}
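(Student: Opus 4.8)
For clarity write $\mathcal{F}(Y)$ for the family of nonempty closed subsets of $Y$, and let $U_\varphi$ denote the open set of points at which $\varphi$ is locally bounded, i.e. $x\in U_\varphi$ iff $\varphi(V):=\bigcup_{y\in V}\varphi(y)\subseteq B_n$ for some open $V\ni x$ and some $n$. The plan is to prove the two implications separately.

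\textbf{$(1)\Rightarrow(2)$.} Given $\varphi:X\to\mathcal{F}(Y)$, I would first form the open sets $W_n=\{x:\varphi(V)\subseteq B_n\text{ for some open }V\ni x\}$; these increase in $n$, satisfy $\varphi(W_n)\subseteq B_n$, and $\bigcup_nW_n=U_\varphi$. Since $X$ is perfectly normal (resp. stratifiable) it is perfect \emph{and} normal, so I can shrink $\{W_n\}$ to an increasing sequence of closed sets $H_n\subseteq W_n$ with $\bigcup_nH_n=U_\varphi$ and, crucially, $\bigcup_n\operatorname{int}H_n=U_\varphi$ (decompose each $W_n$ into increasing closed pieces by perfectness, thicken them inside $W_n$ by normality, and amalgamate diagonally); this last property is exactly what makes $n(x):=\min\{n:x\in H_n\}$ locally bounded on $U_\varphi$. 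In the stratifiable case I would instead obtain the $H_n$ directly from the stratification operator applied to the $W_n$, so that $\varphi\mapsto(H_n)$ is order reversing (note $\varphi\subseteq\varphi'$ forces $W_n(\varphi')\subseteq W_n(\varphi)$). Then I would set
\[
\Phi(\varphi)(x)=\begin{cases}B_{n(x)},& x\in U_\varphi,\\ Y,& x\notin U_\varphi.\end{cases}
\]
This ``block map'' is automatically l.s.c.: for nonempty open $O\subseteq Y$ one checks that $\{x:\Phi(\varphi)(x)\cap O\neq\emptyset\}=X\setminus H_{k_O-1}$ (with $H_0:=\emptyset$), where $k_O=\min\{k:B_k\cap O\neq\emptyset\}$, which is open. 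Also $\varphi\subseteq\Phi(\varphi)$ since $x\in W_{n(x)}$ gives $\varphi(x)\subseteq\varphi(W_{n(x)})\subseteq B_{n(x)}$; $\Phi(\varphi)$ is locally bounded on $U_\varphi$ because $n(\cdot)$ is; and in the stratifiable case $\varphi\subseteq\varphi'$ yields $n_{\varphi'}(x)\ge n_\varphi(x)$, hence $\Phi(\varphi)\subseteq\Phi(\varphi')$, so $\Phi$ is order preserving.

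\textbf{$(2)\Rightarrow(1)$.} First I would extract perfectness: given an open $U\subseteq X$, apply (2) with $Y=\mathbb{N}$ discrete, $B_n=\{1,\dots,n\}$, and $\varphi(x)=\{1\}$ for $x\in U$, $\varphi(x)=\mathbb{N}$ for $x\notin U$. Then $U_\varphi=U$, and for $\psi=\Phi(\varphi)$ the sets $F_n=\{x:\psi(x)\subseteq B_n\}$ are closed (as $\psi$ is l.s.c. and $\mathbb{N}\setminus B_n$ is open), increasing, and $\bigcup_nF_n=U$ (using that $\psi$ is locally bounded on $U$ and $\psi\supseteq\varphi$), so every open set is an $F_\sigma$. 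It then remains to get normality (resp. monotone normality). Here I would run an analogous argument with a more carefully chosen space $Y$ — built so that the two closed sets to be separated are encoded by the boundedness behaviour of $\varphi$ and so that the local-boundedness clause on $U_\varphi$ forces the expansion $\psi$ to be ``small'' near each of them — and combine the resulting open sets with the $\sigma$-closed decomposition already obtained; in the monotone case one extracts instead a monotone separation/insertion operator and invokes that a space is stratifiable exactly when it is semi-stratifiable and monotonically normal.

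\textbf{Main obstacle.} The verifications in $(1)\Rightarrow(2)$ are routine once one has the idea of replacing $\varphi$ by the block map $x\mapsto B_{n(x)}$ rather than trying to enlarge $\varphi$ pointwise. The genuinely delicate point is the normality half of $(2)\Rightarrow(1)$: in contrast to perfectness, the expansion $\Phi(\varphi)$ is entirely unconstrained off $U_\varphi$, while on $U_\varphi$ local boundedness only bounds $\psi$ by some $B_n$ and never excludes a prescribed open subset of $Y$; so the space $Y$ and the map $\varphi$ must be engineered very deliberately so that the local-boundedness clause alone already yields disjoint open neighbourhoods of the two closed sets (monotonically, in the stratifiable case). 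I expect that this is where most of the work — and the cleverness in the choice of $Y$ — will be needed.
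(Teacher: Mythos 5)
This theorem is quoted in the paper from \cite{XH} and is not proved there; the closest internal comparison is with the paper's own Theorems 3.1--3.2, whose proofs follow exactly the scheme you use. Your direction $(1)\Rightarrow(2)$ is correct and is essentially the standard argument: pass to $W_n$, shrink via the perfect-normality (resp.\ stratification) operator of Lemma 2.2/2.4 to closed sets $H_n$ with $\bigcup_n \operatorname{Int}H_n=U_\varphi$, and take the block map $x\mapsto B_{n(x)}$. The l.s.c.\ computation, the inclusion $\varphi\subseteq\Phi(\varphi)$, local boundedness, and order preservation all check out.

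The gap is in $(2)\Rightarrow(1)$. You extract only perfectness from the test map $\varphi_U$ and then defer normality (resp.\ monotone normality) to a hypothetical ``more carefully chosen $Y$,'' declaring this the hard part. In fact no further construction is needed: you are underusing the hypothesis that $\Phi(\varphi)$ is \emph{locally} bounded at each $x\in U_\varphi$, not merely pointwise bounded. Local boundedness at $x$ means there is a neighbourhood $V$ of $x$ and an $n$ with $\Phi(\varphi)(V)\subseteq B_n$, i.e.\ $x\in \operatorname{Int}F_n$ where $F_n=\{x:\Phi(\varphi_U)(x)\subseteq B_n\}$. Hence your single test map already gives closed increasing sets $F_n\subseteq U$ with $\bigcup_n\operatorname{Int}F_n=U$, so $U=\bigcup_n\operatorname{Int}F_n$ with $\overline{\operatorname{Int}F_n}\subseteq F_n\subseteq U$ --- exactly the classical characterization of perfectly normal spaces (Lemma 2.4(ii)'), and, since $U\subseteq V$ forces $\varphi_V\subseteq\varphi_U$ and hence $F_n(U)\subseteq F_n(V)$ by the order-preservation of $\Phi$, the assignment $U\mapsto(\operatorname{Int}F_n(U))_n$ is precisely a stratification in the sense of Definition 1.2. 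This is the same mechanism the paper uses in the converse halves of Theorems 3.1 and 3.2 (``it implies that $x\in \operatorname{Int}F(n,(U_j))$''), and it is the mirror image of the observation you yourself flagged as ``crucial'' in the forward direction. So the missing idea is not a clever choice of $Y$ but the recognition that the interior condition carried by local (as opposed to pointwise) boundedness already encodes the (monotone) normality.
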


\begin{theorem}(\cite{XH})
For a space $X$, the following statements are equivalent:

\begin{enumerate}
\item $X$ is perfect (resp. semi-stratifiable);

\item  for every space $Y$ having a strictly increasing closed cover $\{B_n\}$, there exists an operator $\Phi$ (resp. a preserved order operator $\Phi$) assigning to each set-valued mapping $\varphi: X \rightarrow \mathcal {F}(Y)$, a l.s.c. set-valued mapping $\Phi(\varphi): X \rightarrow \mathcal {F}(Y)$ such that $\Phi(\varphi)(x)$ is bounded at each $x\in U_\varphi$ and that $\varphi\subseteq \Phi(\varphi)$;
\end{enumerate}
\end{theorem}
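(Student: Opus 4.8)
\medskip
\noindent\textbf{Proof strategy.} The plan is to prove both implications by the usual "test-map and staircase" method, using two elementary facts about a space $Y$ with a strictly increasing closed cover $\{B_n\}$: a set $A\subseteq Y$ is bounded iff $A\subseteq B_n$ for some $n$ (so $Y$ itself is unbounded, since no $B_n$ equals $Y$), and for any l.s.c. $\psi\colon X\to\mathcal F(Y)$ each set $\{x:\psi(x)\subseteq B_n\}$ is closed, being the complement of the open set $\{x:\psi(x)\cap(Y\setminus B_n)\neq\emptyset\}$. Throughout, for $\varphi\colon X\to\mathcal F(Y)$ I write $E_n=\{x\in X:\varphi(x)\subseteq B_n\}$ and $V_n=\operatorname{int}E_n$ (open and increasing in $n$), so that $U_\varphi=\bigcup_nV_n$.

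For $(2)\Rightarrow(1)$ I would instantiate $Y=\mathbb R$ with $B_n=[-n,n]$. Given an open $U\subseteq X$, set $\varphi_U(x)=\{0\}$ for $x\in U$ and $\varphi_U(x)=\mathbb R$ for $x\notin U$; then $E_n=U$ for all $n$, so $U_{\varphi_U}=U$. Put $\psi_U=\Phi(\varphi_U)$ and $U_n=\{x:\psi_U(x)\subseteq[-n,n]\}$. These $U_n$ are closed and increasing in $n$, and $\bigcup_nU_n=U$: the inclusion $\supseteq$ holds because $\psi_U(x)$ is bounded for $x\in U_{\varphi_U}=U$, and $\subseteq$ because $\psi_U(x)\subseteq[-n,n]$ forces $\varphi_U(x)\subseteq\psi_U(x)$ bounded, hence $\varphi_U(x)=\{0\}$ and $x\in U$. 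This makes $X$ perfect. In the semi-stratifiable case $U\subseteq V$ yields $\varphi_V\subseteq\varphi_U$ (the only nontrivial case is $x\in V\setminus U$, where $\varphi_V(x)=\{0\}\subseteq\mathbb R=\varphi_U(x)$), so order-preservation of $\Phi$ gives $\psi_V\subseteq\psi_U$ and therefore $U_n\subseteq V_n$; thus $U\mapsto(U_n)$ is a semi-stratification.

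For $(1)\Rightarrow(2)$, fix $Y$ with a strictly increasing closed cover $\{B_n\}$ and an arbitrary $\varphi\colon X\to\mathcal F(Y)$. Since $X$ is perfect, each open set $V_n$ is an increasing union $\bigcup_jF_{n,j}$ of closed sets; in the semi-stratifiable case I would instead take $F_{n,j}$ to be the $j$th term of the semi-stratification of $V_n$ (which we may assume increasing in $j$), so that $V_n(\varphi)\subseteq V_n(\varphi')$ implies $F_{n,j}^{\varphi}\subseteq F_{n,j}^{\varphi'}$. Set $C_k=\bigcup_{n\le k}F_{n,k}$; then $C_k$ is closed, increasing in $k$, $\bigcup_kC_k=U_\varphi$, and $C_k\subseteq V_k\subseteq E_k$, so $m(x):=\min\{k:x\in C_k\}$ is defined for $x\in U_\varphi$, satisfies $\varphi(x)\subseteq B_{m(x)}$, and $\{x\in U_\varphi:m(x)\le k\}=C_k$. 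Now define $\Phi(\varphi)(x)=B_{m(x)}$ for $x\in U_\varphi$ and $\Phi(\varphi)(x)=Y$ otherwise. Then $\varphi\subseteq\Phi(\varphi)$, $\Phi(\varphi)$ is bounded at each point of $U_\varphi$, and l.s.c.\ follows from the identity $\{x:\Phi(\varphi)(x)\cap V\neq\emptyset\}=X\setminus C_{n_V-1}$ for nonempty open $V\subseteq Y$, where $n_V=\min\{n:B_n\cap V\neq\emptyset\}$ and $C_0:=\emptyset$ (a short set-algebra computation using $C_{n_V-1}\subseteq U_\varphi$). For the semi-stratifiable case, $\varphi\subseteq\varphi'$ gives $E_n(\varphi')\subseteq E_n(\varphi)$, hence $V_n(\varphi')\subseteq V_n(\varphi)$, hence $F_{n,j}^{\varphi'}\subseteq F_{n,j}^{\varphi}$, hence $C_k^{\varphi'}\subseteq C_k^{\varphi}$ and $m_{\varphi'}\ge m_{\varphi}$ on $U_{\varphi'}\subseteq U_\varphi$, which yields $\Phi(\varphi)\subseteq\Phi(\varphi')$.

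I expect the main obstacle to be precisely the $(1)\Rightarrow(2)$ construction: one must convert the perfectness of $X$ into a closed increasing exhaustion $(C_k)$ of the open set $U_\varphi$ that is at once \emph{compatible with the scale} $\{B_n\}$ (so that $B_{m(x)}\supseteq\varphi(x)$, keeping the map bounded on $U_\varphi$) and \emph{makes the staircase map $x\mapsto B_{m(x)}$ lower semi-continuous}; the diagonal choice $C_k=\bigcup_{n\le k}F_{n,k}$ is what reconciles these two requirements. Everything else — closedness of the $U_n$, the containments $\varphi_U\subseteq\Phi(\varphi_U)$ and $\varphi\subseteq\Phi(\varphi)$, and the identity $\{x:\Phi(\varphi)(x)\cap V\neq\emptyset\}=X\setminus C_{n_V-1}$ — is routine, and the step from "perfect" to "semi-stratifiable" is handled uniformly by replacing "$V_n$ has some $F_\sigma$-decomposition" with "$V_n$ is decomposed by the semi-stratification operator", which is exactly what upgrades $\Phi$ to a preserved-order operator.
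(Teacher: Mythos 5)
Your proof is correct and follows essentially the method this paper itself uses for the analogous poset-valued results (Theorems 3.1--3.2): a single-open-set test map for $(2)\Rightarrow(1)$, and for $(1)\Rightarrow(2)$ a staircase expansion $x\mapsto B_{m(x)}$ over a monotone closed exhaustion of $U_\varphi$ whose lower semi-continuity and order-preservation are checked exactly as you do (the paper only quotes this particular statement from Xie--Yan \cite{XH} without reproving it). Your diagonal sets $C_k=\bigcup_{n\le k}F_{n,k}$ are a hand-built version of the sequence operator $F(n,(U_j))$ that the paper isolates as Lemma \ref{lem2.2}, so the two arguments differ only in packaging.
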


The purpose of this paper is to generalize real-valued locally bounded functions to locally upper bounded maps with values into some bi-bounded complete and bi continuous posets, which are not necessarily vector spaces or spaces with strictly increasing closed covers, by using the way-below relation $\ll$ and the way-above relation $\ll_d$.
This provides some advantage to the real-valued and set-valued cases.
Indeed, the range $\mathbb{R}$ with the total order can be extended to spaces $P$ with the partial order.
Inspired by Theorem 1.2, Theorem 1.3, Theorem 1.4 and Theorem 1.5,
another purpose of this paper is to characterize stratifiable (or semi-stratifiable) spaces, and monotonically countably paracompact (or monotonically countably metacompact) spaces by expansions of locally upper bounded poset-valued maps along the same lines.
At the last part, we also consider monotone
poset-valued insertions on monotonically normal and monotonically countably paracompact spaces.

Throughout this paper,
all the undefined topological concepts can be found in \cite{En}.

\section{Basic facts and definitions}
In this section, some definitions are restated and some basic facts are listed.
Also, some notions are introduced which seem to be convenient
though they may be found in the references.

\begin{lemma}\label{lem2.2}
For a space $X$, the following statements are equivalent:
\begin{enumerate}
\item $X$ is semi-stratifiable (resp. stratifiable);

\item there is an operator $F$
assigning to each increasing sequence of open sets $(U_{j})_{j\in
\mathbb{N}}$, an increasing sequence of closed sets
$(F(n,(U_{j})))_{n\in\mathbb{N}}$ such that
\begin{enumerate}
\item [(i)] $U_{n}\supseteq
F(n,(U_{j}))$ for each $n\in\mathbb{N}$;

\item [(ii)] $\bigcup_{n\in
\mathbb{N}}F(n,(U_{j}))=\bigcup_{n\in\mathbb{N}}U_{n}$ (resp. (ii)' $\bigcup_{n\in
\mathbb{N}}Int F(n,(U_{j}))=\bigcup_{n\in\mathbb{N}}U_{n}$);

\item [(iii)] given two increasing sequences of open sets
$(U_{j})_{j\in\mathbb{N}}$ and $(G_{j})_{j\in\mathbb{N}}$ such that
$U_{n}\subseteq G_{n}$ for each $n\in\mathbb{N}$, then
$F(n,(U_{j}))\subseteq F(n,(G_{j}))$ for each
$n\in\mathbb{N}$.
\end{enumerate}
\end{enumerate}
\end{lemma}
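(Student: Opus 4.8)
The plan is to prove the two equivalences in tandem, since the ``semi-stratifiable'' and the parenthetical ``stratifiable'' arguments differ only in one closure/interior bookkeeping step. For $(1)\Rightarrow(2)$ I would first pass to single-open-set data. If $X$ is semi-stratifiable, the definition assigns to each open $U$ a sequence of closed sets; replacing the $n$-th by the union of the first $n$, we may assume that the assigned closed sets $H_n(U)$ increase in $n$, still have $\bigcup_nH_n(U)=U$, and are monotone in $U$. If $X$ is stratifiable, fix a stratification $U\mapsto(V_n(U))_{n\in\mathbb{N}}$ (increasing open sets, $\overline{V_n(U)}\subseteq U$, $\bigcup_nV_n(U)=U$, monotone in $U$) and set $H_n(U):=\overline{V_n(U)}$; then the $H_n(U)$ are increasing closed sets with $H_n(U)\subseteq U$, $\operatorname{Int}H_n(U)\supseteq V_n(U)$ (so $\bigcup_n\operatorname{Int}H_n(U)=U$), and $H_n$ monotone in $U$. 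Now, for an increasing sequence of open sets $(U_j)_{j\in\mathbb{N}}$, define the diagonal union
\[
F(n,(U_j)):=\bigcup_{k\le n}H_n(U_k).
\]

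Then $F(n,(U_j))$ is closed as a finite union of closed sets, and the sequence increases in $n$ because $H_n(U_k)\subseteq H_{n+1}(U_k)$ while the index set grows. Since $(U_j)$ is increasing, $H_n(U_k)\subseteq U_k\subseteq U_n$ for $k\le n$, so $F(n,(U_j))\subseteq U_n$, which is (i); this also gives ``$\subseteq$'' in (ii), resp.\ in (ii)$'$ via $\operatorname{Int}F\subseteq F$. For ``$\supseteq$'', if $x\in\bigcup_jU_j$ then $x\in U_m$ for some $m$, hence $x\in H_{n_0}(U_m)$ for some $n_0$, so with $N:=\max\{m,n_0\}$ we get $x\in H_N(U_m)\subseteq F(N,(U_j))$, giving (ii); in the stratifiable case the same $x$ lies in $V_N(U_m)\subseteq\operatorname{Int}H_N(U_m)\subseteq\operatorname{Int}F(N,(U_j))$, giving (ii)$'$. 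Finally, if $U_n\subseteq G_n$ for all $n$, then $H_n(U_k)\subseteq H_n(G_k)$ and hence $F(n,(U_j))\subseteq F(n,(G_j))$, which is (iii).

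For $(2)\Rightarrow(1)$ I would apply the operator $F$ to constant sequences. Given an open $U$, let $(U_j)_{j\in\mathbb{N}}$ be constant with value $U$; it is increasing and $\bigcup_jU_j=U$. In the semi-stratifiable case, assign to $U$ the closed sets $\big(F(n,(U_j))\big)_{n\in\mathbb{N}}$: condition (ii) gives $\bigcup_nF(n,(U_j))=U$ and condition (iii), applied to two such constant sequences, gives monotonicity in $U$, so $X$ is semi-stratifiable. In the stratifiable case, assign to $U$ the open sets $V_n:=\operatorname{Int}F(n,(U_j))$; they increase in $n$, satisfy $\bigcup_nV_n=U$ by (ii)$'$, and $\overline{V_n}\subseteq F(n,(U_j))\subseteq U_n=U$ by (i) and the closedness of $F(n,(U_j))$, while monotonicity in $U$ again comes from (iii); thus $X$ is stratifiable.

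I do not expect a real obstacle: this is a routine diagonalization together with a ``plug in the constant sequence'' trick. The only points requiring attention are (a) in $(1)\Rightarrow(2)$, first normalizing the single-set data so that the assigned sets increase in $n$ (harmless, via partial unions, resp.\ via closures of the open stratification), because statement (2) demands an \emph{increasing} sequence of closed sets; and (b) in the stratifiable case, tracking the chain $V_n\subseteq\operatorname{Int}F(n,\cdot)\subseteq F(n,\cdot)\subseteq U$ carefully, so that the closed sets $F(n,(U_j))$ have interiors covering $\bigcup_nU_n$ and, conversely, so that the stratification condition $\overline{V_n}\subseteq U$ can be recovered from them.
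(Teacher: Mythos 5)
Your proof is correct, but it takes a genuinely different route from the paper's. The paper disposes of this lemma in one line: it observes that condition (2) is, via De Morgan's laws (complementation turning increasing open sequences into decreasing closed sequences and interchanging closures with interiors of complements), equivalent to the corresponding condition in Theorems 3.6 and 3.7 of the cited Xie--Yan paper, and then simply invokes those results. You instead give a direct, self-contained argument from Borges's definition: normalize the single-set data (partial unions in the semi-stratifiable case, closures of the stratification in the stratifiable case), diagonalize to handle an arbitrary increasing sequence, and recover the definition by feeding constant sequences back into $F$. Both arguments are sound; the paper's buys brevity at the cost of an external reference, while yours makes the lemma independent of \cite{XY}, which is arguably preferable for a statement the rest of the paper leans on so heavily. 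One cosmetic remark: since $(U_j)$ is increasing and $H_n(\cdot)$ is monotone, your diagonal union $\bigcup_{k\le n}H_n(U_k)$ collapses to the single set $H_n(U_n)$, so the construction can be stated even more simply as $F(n,(U_j)):=H_n(U_n)$ without changing any of the verifications.
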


\begin{proof}
From De Morgan's laws it follows easily that conditions (2) in Theorem 2.2 and (2) in \cite[Theorem 3.6, 3.7]{XY} are equivalent.
\end{proof}

It is well known that semi-stratiffiable (stratiffiable) spaces
are naturally monotone versions of perfect (perfectly normal) spaces.
We can easily obtain the following result without proof.

\begin{lemma}\label{the2.4}
For a space $X$, the following statements are equivalent:
\begin{enumerate}
\item $X$ is perfect (resp. perfectly normal);

\item there is an operator $F$
assigning to each increasing sequence of open sets $(U_{j})_{j\in
\mathbb{N}}$, an increasing sequence of closed sets
$(F(n,(U_{j})))_{n\in\mathbb{N}}$ such that
\begin{enumerate}
\item [(i)] $U_{n}\supseteq
F(n,(U_{j}))$ for each $n\in\mathbb{N}$;

\item [(ii)] $\bigcup_{n\in
\mathbb{N}}F(n,(U_{j}))=\bigcup_{n\in\mathbb{N}}U_{n}$ (resp. $\bigcup_{n\in
\mathbb{N}}Int F(n,(U_{j}))=\bigcup_{n\in\mathbb{N}}U_{n}$);
\end{enumerate}
\end{enumerate}
\end{lemma}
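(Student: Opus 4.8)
The statement to be proved (Lemma~\ref{the2.4}) asserts the equivalence, for a space $X$, between being perfect (resp. perfectly normal) and the existence of an operator $F$ producing, from each increasing sequence of open sets $(U_j)$, an increasing sequence of closed sets $(F(n,(U_j)))$ satisfying (i) $F(n,(U_j))\subseteq U_n$ and (ii) $\bigcup_n F(n,(U_j))=\bigcup_n U_n$ (resp. (ii)$'$ $\bigcup_n \mathrm{Int}\, F(n,(U_j))=\bigcup_n U_n$)—but without the monotonicity clause (iii) that appears in Lemma~\ref{lem2.2}.

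The plan is to mimic the argument of Lemma~\ref{lem2.2} (which itself reduces, via De~Morgan duality, to the known characterizations of semi-stratifiable and stratifiable spaces), simply dropping the monotonicity requirements on both sides. For the implication $(2)\Rightarrow(1)$: given an arbitrary open set $W$, I would apply the operator $F$ to the constant increasing sequence $(U_j)$ with $U_j=W$ for all $j$. Then $C_n:=F(n,(U_j))$ is an increasing sequence of closed sets with $C_n\subseteq W$ and $\bigcup_n C_n=\bigcup_n U_j=W$ (resp. $\bigcup_n\mathrm{Int}\,C_n=W$ in the perfectly normal case). This witnesses that $W$ is an $F_\sigma$-set whose interiors cover it; ranging over all open $W$ shows $X$ is perfect (resp., by the standard fact that a perfect space in which every open set is a countable union of interiors of closed sets—equivalently, every open set is the union of an increasing sequence of closed sets whose interiors still cover it—is perfectly normal, exactly as in the classical Vedenissoff-type characterization). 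I should double-check here which precise characterization of "perfectly normal" is being invoked; this is the only place where a small amount of care is needed.

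For the converse $(1)\Rightarrow(2)$: if $X$ is perfect, then to each open set $U$ one can assign an increasing sequence of closed sets $\big(c(n,U)\big)_n$ with $c(n,U)\subseteq U$ and $\bigcup_n c(n,U)=U$ (and, in the perfectly normal case, one can moreover arrange $\bigcup_n \mathrm{Int}\, c(n,U)=U$ using normality, e.g.\ by shrinking). Given an increasing sequence of open sets $(U_j)$, set $W=\bigcup_j U_j$, which is open, and define
\[
F(n,(U_j)) := c(n,W)\cap \overline{U_n}.
\]
Wait—$\overline{U_n}$ need not sit inside $U_n$, so instead I would more carefully take $F(n,(U_j)):=c(n,U_1)\cup c(n,U_2)\cup\cdots$ truncated appropriately, or better, exploit that $\bigcup_j U_j$ is open and intersect with the $U_n$ themselves: since the $U_j$ increase, $\{U_n\}$ is an open cover of $W$ in the relative sense, and one sets $F(n,(U_j))=c(n,W)\setminus\bigcup_{m<n}(W\setminus U_m)=c(n,W)\cap\bigcap_{m\le n}U_m=c(n,W)\cap U_n$ — this is closed in $W$ hence closed in $X$ (as $c(n,W)$ is closed in $X$ and contained in $W$), is contained in $U_n$, increases in $n$, and its union over $n$ is $\big(\bigcup_n c(n,W)\big)\cap\big(\bigcup_n U_n\big)=W\cap W=W=\bigcup_n U_n$. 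The perfectly-normal variant (ii)$'$ follows the same way starting from the stronger assignment $c$ with covering interiors, after checking $\mathrm{Int}\,F(n,(U_j))\supseteq \mathrm{Int}\,c(n,W)\cap U_n$ still exhausts $W$.

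The main obstacle—and the reason this is stated as a lemma "without proof"—is essentially bookkeeping: verifying that the closed sets produced are genuinely closed in $X$ (using containment in the open set $W$), genuinely increasing, and that the union/interior-union computation collapses correctly, all while keeping the two parallel cases (perfect vs.\ perfectly normal) in sync. No monotonicity clause is required, so the construction is strictly easier than in Lemma~\ref{lem2.2}; the only genuinely nontrivial external input is the classical characterization of perfectly normal spaces (every open set is a countable union of closed sets with covering interiors, equivalently every closed set is a zero-set), which I would cite from \cite{En}. Given how routine this is, presenting it as "easily obtained without proof" is justified, and a written proof would amount to little more than the paragraph above with the set-membership verifications spelled out.
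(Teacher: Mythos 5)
The paper states this lemma without proof (``we can easily obtain the following result without proof''), so there is no argument of the authors' to compare against; judged on its own terms, your direction $(2)\Rightarrow(1)$ is fine (restrict $F$ to constant sequences and invoke the classical characterization of perfect, resp.\ perfectly normal, spaces), but the construction you settle on for $(1)\Rightarrow(2)$ is broken. You define $F(n,(U_j))=c(n,W)\cap U_n$ with $W=\bigcup_j U_j$ and claim this is ``closed in $W$ hence closed in $X$.'' It is neither: $c(n,W)$ is closed and $U_n$ is open, so the intersection is relatively \emph{open} in $c(n,W)$, not closed. Concretely, in $X=\mathbb{R}$ take $U_n=(0,1-\frac{1}{n+1})$, so $W=(0,1)$, and $c(n,W)=[\frac{1}{n+1},1-\frac{1}{n+1}]$; then $c(n,W)\cap U_n=[\frac{1}{n+1},1-\frac{1}{n+1})$ is closed neither in $X$ nor in $W$. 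The intermediate identity $c(n,W)\setminus\bigcup_{m<n}(W\setminus U_m)=c(n,W)\cap U_n$ is also wrong for \emph{increasing} sequences: since $c(n,W)\subseteq W$, the left-hand side equals $c(n,W)\cap\bigcap_{m<n}U_m=c(n,W)\cap U_1$.

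The gap is repairable by the standard device: apply the perfectness assignment to each $U_m$ separately rather than only to $W$, and set $F(n,(U_j))=\bigcup_{m\le n}c(n,U_m)$. This is a finite union of closed sets, hence closed; it lies in $U_n$ because $c(n,U_m)\subseteq U_m\subseteq U_n$ for $m\le n$; it is increasing in $n$ because each $c(\cdot,U_m)$ is; and $\bigcup_n F(n,(U_j))=\bigcup_m\bigcup_{n\ge m}c(n,U_m)=\bigcup_m U_m$. In the perfectly normal case the same computation with $\mathrm{Int}\,F(n,(U_j))\supseteq\mathrm{Int}\,c(n,U_m)$ yields (ii)$'$. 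With this substitution the rest of your outline goes through, and the external input you cite (perfectly normal iff every open set is an increasing union of closed sets whose interiors still cover it) is indeed the correct fact to use in $(2)\Rightarrow(1)$.
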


\begin{lemma}\label{lem2.5}
A space $X$ is said to be monotonically countably metacompact (MCM) (resp. monotonically countably paracompact (MCP)) if and only if there is an operator $F$ assigning to each increasing sequence $(U_j)_{j\in\mathbb{N}}$
of open sets of $X$ satisfying $\bigcup_{i\in \mathbb{N}}U_i=X$, a sequence of closed sets $F((U_j))=(F(n,(U_j)))_{n\in\mathbb{N}}$ such that
 \begin{enumerate}
  \item $U_n \supseteq F(n,(U_j))$ for each $n\in\mathbb{N}$;
  \item $\bigcup_{n\in\mathbb{N} }F(n,(U_j))=X$ (resp. (2)' $\bigcup_{n\in \mathbb{N}}Int F(n,(U_j))=X$);
  \item given two increasing sequences of open sets $(U_j)_{j\in \mathbb{N}}$ and $(G_j)_{j\in \mathbb{N}}$ such that $U_n \subseteq G_n$ for each $n\in\mathbb{N}$, then $F(n,(U_j))\subseteq F(n,(G_j))$ for each $n\in \mathbb{N}$.
\end{enumerate}
\end{lemma}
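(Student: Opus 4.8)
The plan is to deduce Lemma~\ref{lem2.5} from Definition~\ref{def2.4} by De Morgan duality, exactly in the spirit of the passage from Lemma~\ref{lem2.2} to the closed-set formulation. The key observation is that complementation $(D_j)_j\mapsto(X\setminus D_j)_j$ is a bijection from the collection of decreasing sequences of closed subsets of $X$ with $\bigcap_jD_j=\emptyset$ onto the collection of increasing sequences of open subsets of $X$ with $\bigcup_jU_j=X$, its inverse being the same complementation rule; moreover if $U_n\subseteq G_n$ for all $n$ and $(U_j)_j$ covers $X$ then so does $(G_j)_j$, so both domains are closed under the termwise-inclusion relations appearing in clause (3). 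Thus an operator on one collection transports canonically to an operator on the other, and the whole content is to verify that the defining conditions correspond under complementation.

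For the forward direction I would assume $X$ is MCM (resp. MCP), take the operator $U$ supplied by Definition~\ref{def2.4}, and, given an increasing open sequence $(U_j)_j$ with $\bigcup_jU_j=X$, put $D_j=X\setminus U_j$ and define $F(n,(U_j)):=X\setminus U(n,(D_j))$, which is closed since $U(n,(D_j))$ is open. Then ``$D_n\subseteq U(n,(D_j))$'' complements to ``$F(n,(U_j))\subseteq U_n$'', giving (1); ``$\bigcap_nU(n,(D_j))=\emptyset$'' complements to ``$\bigcup_nF(n,(U_j))=X$'', giving (2); for MCP, ``$\bigcap_n\overline{U(n,(D_j))}=\emptyset$'' complements, via $\mathrm{Int}(X\setminus A)=X\setminus\overline{A}$, to ``$\bigcup_n\mathrm{Int}\,F(n,(U_j))=X$'', giving $(2)'$; and if $U_n\subseteq G_n$ for all $n$, then $X\setminus G_n\subseteq X\setminus U_n$ for all $n$, so condition (3) of Definition~\ref{def2.4} applied to the decreasing closed sequences $(X\setminus G_j)_j$ and $(X\setminus U_j)_j$ gives $U(n,(X\setminus G_j))\subseteq U(n,(X\setminus U_j))$, and complementing yields $F(n,(U_j))\subseteq F(n,(G_j))$, which is (3). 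The converse direction is entirely symmetric: starting from the operator $F$ of Lemma~\ref{lem2.5} and a decreasing closed sequence $(D_j)_j$ with empty intersection, set $U_j=X\setminus D_j$ and $U(n,(D_j)):=X\setminus F(n,(U_j))$, and the same complementations recover (1), (2) (resp. $(2)'$), (3) of Definition~\ref{def2.4}.

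I do not expect a genuine obstacle; the lemma is a notational recasting of Definition~\ref{def2.4}. The only two points requiring a little care are the order reversal in the monotonicity clause --- inclusions of open sets become reverse inclusions of their closed complements, so the two sequences must be fed into the operator in the correct order --- and, for the MCP half, remembering to pass between the closure of a set and the interior of its complement rather than complementing naively. For this reason the proof can reasonably be left as the indicated De Morgan computation, parallel to the proof of Lemma~\ref{lem2.2}.
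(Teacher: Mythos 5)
Your proposal is correct and is exactly the argument the paper intends: its proof of Lemma~\ref{lem2.5} consists of the single remark that the equivalence with Definition~\ref{def2.4} ``follows easily from De Morgan's laws,'' and your complementation $F(n,(U_j))=X\setminus U(n,(X\setminus U_j))$ (together with $\mathrm{Int}(X\setminus A)=X\setminus\overline{A}$ for the MCP clause and the order reversal in the monotonicity clause) is precisely that computation, written out in full.
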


 \begin{proof}
From De Morgan's laws it follows easily that conditions (1), (2) and (3) are equivalent to Definition 2.4.
\end{proof}

In the rest of this section, let us recall some definitions and terminology from \cite{Gi, Ke}.

Let $P=(P,\leq)$ be a poset.
For $a, b \in P$, the symbol $[a, b]$ stands for $\{y\in P:a \leq y\leq b\}$.
A subset $A$ of $P$ is said to be {\it directed} (resp. {\it filtered}) if $A$ is nonempty and for every $x, y\in A$ there exists $z\in A$ such that $x\leq z$ and $y\leq z$ (resp. $z\leq x$ and $z\leq y$).
For a subset $A$ of $P$, $\bigvee A$(resp. $\bigwedge A$) stands for the sup (resp. inf) of $A$, if exists.
For $x, y\in P$, $x$ is {\it way below} $y$, in symbol $x \ll y$, if for all directed subset $D$ of $P$ with $\bigvee D$, the relation $y\leq\bigvee D$ always implies the existence of $d\in D$ with $x \leq d$.
For $x, y\in P$, $x$ is {\it way above} $y$, in symbol $y\ll_d x$, if for all filtered subset $F$ of $P$ with $\bigwedge F$, the relation $\bigwedge F\leq y$ always implies the existence of $f\in F$ with $f\leq x$.
In a lattice $L$, $x \ll y$ (resp. $x \ll_d y$) if and only if for every subset $A$ of $L$ with $\bigvee A$ (resp. $\bigwedge A$) the relation $y\leq \bigvee A$ (resp. $\bigwedge A \leq y$) always implies the existence of a finite subset $B$ of $A$ such that $x \leq \bigvee B$ ($\bigwedge B\leq x$).
Note that the way-above relation $\ll_d$ is precisely the dual relation of the way below relation of $P^{op}$, i.e. $x \ll_d y\Leftrightarrow y\ll^{op}x$.
An element $x$ of $P$ is {\it isolated from below} (resp. {\it isolated from above}) if $x \ll x$ (resp. $x \ll_dx$).
Clearly, an element $x$ of $P$ is isolated from above iff $x$ is isolated from below in $P^{op}$.
On $\mathbb{R}$, it is clear that $x \ll y$ if and only if $x <y$ if and only if $x \ll_dy$.
On the unit interval $[0, 1]$ of $\mathbb{R}$, we have that $x \ll y$ if and only if $x <y$ or $x =y=0$, and that $x \ll_dy$ if and only if $x <y$ or $x =y=1$.
A poset $P$ is {\it continuous} if $\{u \in P:u \ll x\}$ is directed and $x =\bigvee\{u \in P:u\ll x\}$ for all $x \in P$.
A poset $P$ is {\it dually continuous} if $\{u\in P:x \ll_du\}$ is filtered and $x =\bigwedge\{u \in P:x \ll_du\}$ for all $x \in P$, in other words, if $P^{op}$ is continuous.
A poset $P$ is called bicontinuous if $P$ is continuous and dually continuous.
As is pointed out in \cite{Ke}, the way-below relation in a bicontinuous poset need not be the opposite of the way-above relation in the sense that $x \ll y$ does not imply $x \ll_dy$.
A {\it bicontinuous lattice} $P$ is a lattice which is bicontinuous as a poset.
It should be noted that we do not require the completeness of $P$ in our definition of $P$ being bicontinuous, where a {\it complete} lattice $P$ is a poset in which every subset has the sup and the inf.

We call a poset $P$ {\it lower-bounded complete} (resp. {\it upper-bounded complete}) if every non-empty subset $A$ of $P$ with a lower bound (resp. an upper bound) has the inf (resp. sup).
When $P$ is lower-bounded complete and upper-bounded complete, we call $P$ {\it bi-bounded complete}.
Note that every bounded complete domain in the sense of [5] is bi-bounded complete.
For maps $f, g:X\rightarrow P$ into a poset $P$, the symbol $f\ll g$ (resp. $f\ll_d g$, $f\leq g$) stands for $f(x)\ll g(x)$ (resp. $f(x)\ll_d g(x)$, $f(x) \leq g(x)$) for each $x\in X$.
For a point $z$ and a pair of points $\langle y,y'\rangle$ of a poset $P$, $z$ is an {\it interpolated point} of $\langle y,y'\rangle$ if $y\ll_d z\ll y'$.
A pair $\langle f,g\rangle$ of maps $f, g:X\rightarrow P$ has {\it interpolated points pointwise} if $\langle f(x), g(x)\rangle$ has an interpolated point for each $x\in X$.

For a subset $B$ of a poset $P$ and $y, y'\in P$, the pair of points $\langle y,y'\rangle$ has interpolated points on $B$ if there exists $z\in B$ such that $z$
is an interpolated point of $\langle y,y'\rangle$.
A pair $\langle f,g\rangle$ of maps $f, g:X\rightarrow P$ has interpolated points pointwise on B if $\langle f(x), g(x)\rangle$ has interpolated points on $B$ for each $x\in X$.

For a non-empty subset $A$ of $X$, a pair $\langle f,y\rangle$ (resp. $\langle y,f\rangle$) of a map $f: X\rightarrow P$ and a point $y\in P$ has {\it interpolated points of $A$} if $\langle f(x),y\rangle$ (resp. $\langle y,f(x)\rangle$) has interpolated points for each $x\in A$.

For a non-empty subset $A$ of $X$, a pair $\langle f,g\rangle$ of maps $f,g:X\rightarrow P$ has {\it interpolated points of $A$} if $\langle f(x),g(x)\rangle$ has interpolated points for each $x\in A$.
We define $G_{f,g}=\{x\in X: \langle f(x),g(x)\rangle \text{~has an interpolated point~}\}.$

See \cite{En} and \cite{Gi} for undefined terminology.

\begin{lemma}\label{lem2.6}\cite{Gi}
For a poset $P$, the following statements hold.
 \begin{enumerate}
  \item $x \ll_dy$$\Rightarrow$$x \leq y$;
  \item $u \leq x \ll_dy\leq v\Rightarrow u \ll_dv$;
  \item $z\ll_dx$ and $z\ll_dy$$\Rightarrow$$z\ll_dx \wedge y$, whenever $x \wedge y$ exists.
\end{enumerate}
If $P$ is a dually continuous poset, (4) below also holds.

 (4) $x\ll_dy$ $\Rightarrow$ $\exists$ $z\in P$ s.t. $x \ll_dz\ll_dy$.
\end{lemma}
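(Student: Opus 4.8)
The plan is to obtain (1)--(3) straight from the definition of $\ll_d$ (its quantifier over filtered subsets that possess an infimum), and to deduce (4) from the interpolation property of continuous posets by passing to $P^{op}$, where $x\ll_d y$ reads $y\ll^{op}x$ and ``$P$ dually continuous'' reads ``$P^{op}$ continuous''.

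For (1), I would test $x\ll_d y$ on the filtered singleton $F=\{x\}$: here $\bigwedge F=x\le x$, so the definition yields $f\in F$ with $f\le y$, i.e.\ $x\le y$. For (2), given $u\le x\ll_d y\le v$ and any filtered $F$ with $\bigwedge F\le u$, one has $\bigwedge F\le u\le x$, so $x\ll_d y$ supplies $f\in F$ with $f\le y\le v$, whence $u\ll_d v$. For (3), given $z\ll_d x$, $z\ll_d y$, with $x\wedge y$ existing, and a filtered $F$ with $\bigwedge F\le z$, the two hypotheses give $f_1,f_2\in F$ with $f_1\le x$ and $f_2\le y$; filteredness of $F$ then produces $f_3\in F$ below both $f_1$ and $f_2$, so $f_3\le x\wedge y$ and therefore $z\ll_d x\wedge y$.

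The substance is in (4). After the reduction it suffices to prove the interpolation lemma in a continuous poset $Q$: if $a\ll b$ then $a\ll c\ll b$ for some $c$. I would set $A=\{u\in Q:\ u\ll v\ll b\ \text{for some}\ v\in Q\}$ and verify two things. First, $A$ is directed: given $u_1\ll v_1\ll b$ and $u_2\ll v_2\ll b$, use directedness of $\{v:v\ll b\}$ to get $v_3\ll b$ above $v_1$ and $v_2$ (so $u_i\ll v_3$ by order-monotonicity of $\ll$, the analogue of (2)), then apply continuity at $v_3$ to pull each $u_i$ below some $w_i\ll v_3$, and use directedness of $\{w:w\ll v_3\}$ to land below a single $w_3\ll v_3\ll b$; thus $w_3\in A$ and $u_1,u_2\le w_3$. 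Second, $\bigvee A=b$: every $u\in A$ satisfies $u\le b$ (by the $\ll$-analogue of (1)), so $b$ is an upper bound, while any upper bound $m$ of $A$ dominates $\{u:u\ll v\}\subseteq A$ for each $v\ll b$, hence $m\ge\bigvee\{u:u\ll v\}=v$, hence $m\ge\bigvee\{v:v\ll b\}=b$, so $b$ is the least upper bound. Then $a\ll b=\bigvee A$ with $A$ directed gives $u_0\in A$ with $a\le u_0$; choosing $v_0$ with $u_0\ll v_0\ll b$ we obtain $a\le u_0\ll v_0$, hence $a\ll v_0\ll b$, and $c:=v_0$ works. Translating back to $P$ yields an element $z$ with $x\ll_d z\ll_d y$.

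I expect the directedness step in (4) to be the only delicate point: it forces one to invoke continuity of $Q$ twice --- once at $b$ and once at the auxiliary element $v_3$ --- and to chain two uses of order-monotonicity of $\ll$; everything else is immediate from the definitions and from items (1)--(2) read off for $\ll$.
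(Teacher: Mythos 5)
Your proof is correct: items (1)--(3) follow exactly as you argue from the definition of $\ll_d$, and your treatment of (4) — passing to $P^{op}$ and running the standard interpolation argument with the directed set $A=\{u: u\ll v\ll b \text{ for some } v\}$ — is precisely the proof given in the cited reference [Gi], which the paper invokes without reproducing. The only cosmetic omission is noting that $A$ is nonempty (immediate since $\{v:v\ll b\}$ and each $\{u:u\ll v\}$ are directed, hence nonempty), which is needed before calling $A$ directed.
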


For a subset $A$ of a topological space $X$ and $x\in X$, $\overline{A}$ stands for the closure of $A$ and $\mathcal{N}_x$ is the set of all neighborhoods of $x$.

Let $f:X\rightarrow P$ be a map from a topological space $X$ to a poset $P$ (which is not assumed to be a topological space), and $x \in X$. Set
$$\mathcal{N}_{x*}(f) = \{N \in \mathcal{N}_x : f(N) \text{~has the inf}\}, \text{~and~} \mathcal{N}^*_x (f) = \{N \in \mathcal{N}_x : f(N) \text{~has the sup}\}.$$
We call that $f$ {\it admits} $f_*(x)$ if $\mathcal{N}_{x*}(f)\neq\emptyset$ and $\{\bigwedge f(N):N\in\mathcal{N}_{x*}(f)\}$ has the sup, and then we define $f_*(x)=\bigvee\{\bigwedge f(N):N\in\mathcal{N}_{x*}(f)\}$.
Also, $f$ {\it admits} $f^*(x)$ if the set $\mathcal{N}^*_x(f)\neq\emptyset$ and $\{\bigvee f(N) :N\in \mathcal{N}_x^*(f)\}$ has the inf, and then we define
$f^*(x) =\bigwedge\{\bigvee f(N):N\in \mathcal{N}_{x*}(f)\}$.
A map $f:X\rightarrow P$ is {\it lower semi-continuous} (resp. {\it upper semi-continuous}) at $x$ if $f$ admits $f_*(x)$ (resp. $f^*(x)$) and $f(x) =f_*(x)$ (resp. $f(x) =f^*(x)$).
A map $f:X\rightarrow P$ is {\it lower semi-continuous} (resp. {\it upper semi-continuous}) if $f$ is lower (resp. upper) semi-continuous at every $x\in X$.
Since $P$ is not assumed to be complete, for $A \subset P$, $\bigwedge A$ or $\bigvee A$ does not necessarily exist.
If there is no confusion, we simply express $f_*(x) =\bigvee_{N\in\mathcal{N}_x}\bigwedge f(N)$ and $f^*(x) =\bigwedge _{N\in \mathcal{N}_x}\bigvee f(N)$ for each $x \in X$ (if exists).

It is defined in \cite{En} that a real-valued function $f:X\rightarrow \mathbb{R} $ is lower semi-continuous if $\{x:f(x)>r\}$ is open for each $r\in \mathbb{R}$
(namely, for each $x\in X$ and each $\varepsilon>0$ there exists a neighborhood $O_x$ of $x$ such that $f(x')>f(x)-\varepsilon$ for each $x'\in O_x$).
A real-valued function $f:X\rightarrow\mathbb{R}$ is upper semi-continuous if $-f$ is lower semi-continuous.
Note that this definition coincide with the above definition of semi-continuous maps with values into ordered topological vector spaces $Y$ for $Y=\mathbb{R}$.

\begin{proposition}\cite{Yama}\label{prop2.7}
Let $P$ be a poset, $x\in X$ and $f:X\rightarrow P$ a map. Consider the following conditions:
\begin{enumerate}
\item [(1)] $\{\bigwedge f(N):N\in\mathcal{N}_{x*}(f)\}$ (resp.) $\{\bigvee f(N):N\in\mathcal{N}_{x}^*(f)\}$ has the sup (resp. inf);
\item [(2)] $\mathcal{N}_{x*}(f)\neq\emptyset$ (resp. $\mathcal{N}_{x}^*(f)\neq\emptyset$ );
\item [(3)] $\mathcal{N}_{x*}(f)$ (resp. $\mathcal{N}_{x}^*(f)$) is a neighborhood base of $x$.
\end{enumerate}
Then, the following statements (a), (b), (c) and (d) hold.

\begin{enumerate}
\item [(a)] If P is lower-bounded (resp. upper-bounded) complete, (1)$\Rightarrow$(2) and (2)$\Rightarrow$(3) hold.
\item [(b)] If P is a cdcpo (resp. cfcpo), (3)$\Rightarrow$(1) holds.
\item [(c)] If $P$ is bi-bounded complete, the conditions (1), (2)and (3)are equivalent, that is, $f$ admits $f_*(x)$ (resp.$f^*(x)$) whenever either one of (1), (2 )and (3)
holds
\item [(d)] If $P$ is bi-bounded complete and $f(X)$ has a lower (resp. an upper) bound, (2) holds, thus, $f$ admits $f_*$ (resp. $f^*$).
\end{enumerate}
\end{proposition}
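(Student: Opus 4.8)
My plan is to prove (a)--(d) only in the ``$f_*$'' form --- the one phrased with $\mathcal{N}_{x*}(f)$, infima, lower-bounded completeness and cdcpo --- and then obtain the ``$f^*$'' form for free by applying the $f_*$-version to the opposite poset $P^{op}$ (under $P\mapsto P^{op}$ one has $\bigwedge\leftrightarrow\bigvee$, lower-bounded $\leftrightarrow$ upper-bounded complete, cdcpo $\leftrightarrow$ cfcpo, and $\mathcal{N}_{x*}(f)\leftrightarrow\mathcal{N}^*_x(f)$), so the translation is purely mechanical. For $(1)\Rightarrow(2)$ in (a) I would argue by contradiction: if $\mathcal{N}_{x*}(f)=\emptyset$ then the set in $(1)$ is empty, and the supremum of the empty family, should it exist, is forced to be the least element $\bot$ of $P$; but then $\bot$ is a lower bound of the nonempty set $f(X)$, so lower-bounded completeness produces $\bigwedge f(X)$, i.e.\ $X\in\mathcal{N}_{x*}(f)$, a contradiction. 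For $(2)\Rightarrow(3)$ I would fix $N_0\in\mathcal{N}_{x*}(f)$ and, for arbitrary $M\in\mathcal{N}_x$, set $N=M\cap N_0$: then $\emptyset\neq f(N)\subseteq f(N_0)$, so $\bigwedge f(N_0)$ bounds $f(N)$ below and lower-bounded completeness gives $\bigwedge f(N)$; hence $N\in\mathcal{N}_{x*}(f)$ with $N\subseteq M$, so $\mathcal{N}_{x*}(f)$ is a neighbourhood base at $x$.

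For (b), assuming $(3)$, I would check that $S:=\{\bigwedge f(N):N\in\mathcal{N}_{x*}(f)\}$ is directed and bounded above, so that $\bigvee S$ exists in the cdcpo $P$ --- which is precisely $(1)$. The set $S$ is nonempty since a neighbourhood base at $x$ is nonempty. It is directed: given $\bigwedge f(N_1),\bigwedge f(N_2)\in S$, choose $N_3\in\mathcal{N}_{x*}(f)$ with $N_3\subseteq N_1\cap N_2$ (possible because $\mathcal{N}_{x*}(f)$ is a base and $N_1\cap N_2\in\mathcal{N}_x$); then $f(N_3)\subseteq f(N_i)$ forces $\bigwedge f(N_i)\leq\bigwedge f(N_3)$ for $i=1,2$. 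It is bounded above by $f(x)$, because $x\in N$ implies $f(x)\in f(N)$ and hence $\bigwedge f(N)\leq f(x)$ for every $N\in\mathcal{N}_{x*}(f)$.

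For (c): bi-bounded completeness gives in particular lower-bounded completeness, so $(1)\Rightarrow(2)\Rightarrow(3)$ by (a); and every directed subset that is bounded above is a nonempty subset with an upper bound, hence has a supremum by upper-bounded completeness, so $P$ is a cdcpo and $(3)\Rightarrow(1)$ by (b). Thus $(1),(2),(3)$ are equivalent, and since $(2)$ and $(1)$ are exactly the two clauses in the definition of ``$f$ admits $f_*(x)$'', whichever one holds gives that $f$ admits $f_*(x)$. For (d): if $f(X)$ has a lower bound then, $f(X)$ being nonempty, lower-bounded completeness yields $\bigwedge f(X)$, so $X\in\mathcal{N}_{x*}(f)$ and $(2)$ holds at every $x\in X$; by (c), $f$ admits $f_*(x)$ for each $x$, i.e.\ $f$ admits $f_*$. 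The step I expect to need the most care is the empty-family case of $(1)\Rightarrow(2)$: one must notice that an empty set can have only $\bot$ as a supremum and then feed $\bot$ back through lower-bounded completeness to reach the contradiction. Everything else is routine bookkeeping with neighbourhood bases together with the trivial inequality $\bigwedge f(N)\leq f(x)$.
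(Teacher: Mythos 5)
Your proof is correct. The paper does not prove this proposition itself (it is imported from Yamazaki's paper \cite{Yama} without proof), and your argument --- the dualization to $P^{op}$, the empty-supremum/bottom-element case for $(1)\Rightarrow(2)$, the refinement $N=M\cap N_0$ for $(2)\Rightarrow(3)$, and the directed, $f(x)$-bounded set $\{\bigwedge f(N):N\in\mathcal{N}_{x*}(f)\}$ for $(3)\Rightarrow(1)$ --- is exactly the standard argument that establishes it.
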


\begin{proposition}\cite{Yama}\label{prop2.8}
Let $P$ be a poset, $x\in X$ and $f:X\rightarrow P$ a map. Consider the following conditions:
Let $X$ be a topological space, $P$ a poset, and assume that $f$ admits $f_*$ and $\mathcal{N}_{x*}(f)$ is a
neighborhood base of $x$ for each $x\in X$. For a map $f: X\rightarrow P$, consider the following conditions:
\begin{enumerate}
\item [(1)] f is lower semi-continuous;
\item [(2)] $\{x\in X: a\ll f(x)\}$ is open for each $a\in P$;
\item [(3)] $\{x\in X: f(x)\leq a\}$ is closed for each $a\in P$.
\end{enumerate}
Then, $(1)\Rightarrow(3)$ always holds. If P is continuous, $(1)\Leftrightarrow(2)$ holds.
\end{proposition}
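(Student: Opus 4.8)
\emph{Plan.} The plan is to treat the two assertions separately, in each case producing, at a point of the set in question (or of its complement), an explicit neighbourhood witnessing openness. Throughout, for a point $x$ write $D_x=\{\bigwedge f(N):N\in\mathcal{N}_{x*}(f)\}$, so that the standing hypothesis ``$f$ admits $f_*$'' says exactly that $D_x$ has a supremum and $f_*(x)=\bigvee D_x$.

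\emph{The implication $(1)\Rightarrow(3)$.} This needs nothing beyond the standing hypotheses. I would fix $a\in P$ and a point $x$ with $f(x)\not\leq a$, and look for a neighbourhood of $x$ disjoint from $\{x':f(x')\leq a\}$. Lower semi-continuity gives $f(x)=f_*(x)=\bigvee D_x$; if $\bigwedge f(N)\leq a$ held for every $N\in\mathcal{N}_{x*}(f)$ then $a$ would be an upper bound of $D_x$ and hence $f(x)\leq a$, a contradiction, so I may pick $N_0\in\mathcal{N}_{x*}(f)$ with $\bigwedge f(N_0)\not\leq a$. For $x'\in N_0$ one has $\bigwedge f(N_0)\leq f(x')$, so $f(x')\leq a$ would force $\bigwedge f(N_0)\leq a$; thus $N_0$ is disjoint from $\{x':f(x')\leq a\}$ and $(3)$ follows.

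\emph{The equivalence $(1)\Leftrightarrow(2)$ for continuous $P$.} First I would record that $D_x$ is directed for each $x$: given $N_1,N_2\in\mathcal{N}_{x*}(f)$, since $\mathcal{N}_{x*}(f)$ is a neighbourhood base there is $N_3\in\mathcal{N}_{x*}(f)$ with $N_3\subseteq N_1\cap N_2$, and then $\bigwedge f(N_3)$ dominates both $\bigwedge f(N_1)$ and $\bigwedge f(N_2)$, since $A\subseteq B$ implies $\bigwedge B\leq\bigwedge A$. For $(1)\Rightarrow(2)$: fix $a\in P$ and $x$ with $a\ll f(x)$; by the interpolation property of $\ll$ in a continuous poset (the dual of Lemma~\ref{lem2.6}(4); see also \cite{Gi}) choose $b$ with $a\ll b\ll f(x)$. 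As $f(x)=\bigvee D_x$ with $D_x$ directed and $b\ll f(x)$, there is $N_0\in\mathcal{N}_{x*}(f)$ with $b\leq\bigwedge f(N_0)$, and then for every $x'\in N_0$, $a\ll b\leq\bigwedge f(N_0)\leq f(x')$, whence $a\ll f(x')$ (using $u\ll v\leq w\Rightarrow u\ll w$); so $N_0\subseteq\{x':a\ll f(x')\}$ and the latter set is open. For $(2)\Rightarrow(1)$: the inequality $f_*(x)\leq f(x)$ is automatic because $x\in N$ gives $\bigwedge f(N)\leq f(x)$ for every $N\in\mathcal{N}_{x*}(f)$; for the reverse, continuity gives $f(x)=\bigvee\{a\in P:a\ll f(x)\}$, so it suffices to prove $a\leq f_*(x)$ for each $a\ll f(x)$. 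For such $a$ the open set $\{x':a\ll f(x')\}$ contains $x$, hence contains some $N_0\in\mathcal{N}_{x*}(f)$, and $a\leq f(x')$ for all $x'\in N_0$ yields $a\leq\bigwedge f(N_0)\leq f_*(x)$. Hence $f(x)=f_*(x)$ at every $x$, i.e. $f$ is lower semi-continuous.

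\emph{Main obstacle.} The one step that genuinely requires care is the interpolation in $(1)\Rightarrow(2)$: directedness of $D_x$ only extracts $a\leq\bigwedge f(N_0)$ from $a\ll f(x)$, which is not enough to deduce $a\ll f(x')$ on $N_0$, so one must first split $a\ll f(x)$ as $a\ll b\ll f(x)$ — this is precisely where continuity of $P$ is used (it is not needed for $(1)\Rightarrow(3)$, nor for $(2)\Rightarrow(1)$ beyond the representation $f(x)=\bigvee\{a:a\ll f(x)\}$). Everything else is routine bookkeeping with infima of images of shrinking neighbourhoods, plus the elementary facts $u\ll v\leq w\Rightarrow u\ll w$ and $A\subseteq B\Rightarrow\bigwedge B\leq\bigwedge A$.
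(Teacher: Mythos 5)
Your proof is correct: the paper states this proposition without proof (it is quoted from \cite{Yama}), and your argument is the standard one — $(1)\Rightarrow(3)$ from the fact that $f(x)=\bigvee_{N}\bigwedge f(N)$ cannot lie below $a$ unless some $\bigwedge f(N_0)$ fails to, $(1)\Rightarrow(2)$ via directedness of $\{\bigwedge f(N)\}$ plus interpolation of $\ll$ in a continuous poset, and $(2)\Rightarrow(1)$ via $f(x)=\bigvee\{a:a\ll f(x)\}$. You also correctly flag the one non-routine point, namely that interpolation (hence continuity of $P$) is genuinely needed in $(1)\Rightarrow(2)$, since directedness alone only yields $a\leq\bigwedge f(N_0)$ rather than $a\ll f(x')$.
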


\begin{proposition}\cite{Yama}\label{prop2.9}
For a topological space $X$ and a bi-bounded complete, continuous (resp. dually continuous) poset $P$,
if $f:X\rightarrow P$ is lower (resp. upper)semi-continuous, then $\{x\in X: a \ll f(x)\}$ (resp. $\{x\in X: f(x) \ll_d a\}$ )is open for each $a\in P$.
The converse holds if, in addition, $\mathcal{N}_{x*}(f)\neq\emptyset$ (resp. $\mathcal{N}_{x}^*(f)\neq\emptyset$).
\end{proposition}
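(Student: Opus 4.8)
The plan is to deduce Proposition~\ref{prop2.9} from Propositions~\ref{prop2.7} and~\ref{prop2.8}, handling the ``lower/continuous'' case directly and then obtaining the ``upper/dually continuous'' case by passing to the opposite poset $P^{op}$.

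For the forward implication, suppose $f:X\rightarrow P$ is lower semi-continuous. By definition this means $f$ admits $f_*(x)$ for every $x\in X$; unwinding the definition of ``admits'', this is precisely the statement that for each $x$ both condition~(1) and condition~(2) of Proposition~\ref{prop2.7} hold. Since $P$ is bi-bounded complete, part~(c) of Proposition~\ref{prop2.7} says (1), (2) and (3) are then all equivalent, so in particular (3) holds: $\mathcal{N}_{x*}(f)$ is a neighborhood base of $x$, for every $x\in X$. Thus the standing hypotheses of Proposition~\ref{prop2.8} are met ($f$ admits $f_*$, and each $\mathcal{N}_{x*}(f)$ is a neighborhood base of $x$), and since $P$ is continuous the equivalence $(1)\Leftrightarrow(2)$ of Proposition~\ref{prop2.8} applies; from $(1)$ (i.e.\ $f$ l.s.c.) we obtain $(2)$, namely that $\{x\in X:a\ll f(x)\}$ is open for each $a\in P$. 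For the converse, assume additionally that $\mathcal{N}_{x*}(f)\neq\emptyset$ for each $x\in X$. This is condition~(2) of Proposition~\ref{prop2.7} at every point, so by part~(c) again (here is where bi-bounded completeness, not merely lower-bounded completeness, is used) conditions~(1) and~(3) also hold at each point; hence $f$ admits $f_*$ and each $\mathcal{N}_{x*}(f)$ is a neighborhood base of $x$, so Proposition~\ref{prop2.8} is applicable, and using continuity of $P$ together with $(2)\Rightarrow(1)$ there, the openness of $\{x:a\ll f(x)\}$ for all $a$ forces $f$ to be lower semi-continuous.

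The ``resp.'' (upper/dually continuous) half follows by dualization. Bi-bounded completeness is self-dual; $P$ is dually continuous iff $P^{op}$ is continuous; $f:X\rightarrow P$ is upper semi-continuous iff $f:X\rightarrow P^{op}$ is lower semi-continuous, and under this identification $\mathcal{N}^*_x(f)$ computed in $P$ coincides with $\mathcal{N}_{x*}(f)$ computed in $P^{op}$. Moreover, by the relation $x\ll_d y\Leftrightarrow y\ll^{op}x$ recorded earlier, the set $\{x\in X:f(x)\ll_d a\}$ in $P$ equals $\{x\in X:a\ll^{op}f(x)\}$ in $P^{op}$. Applying the already-proved statement to $f:X\rightarrow P^{op}$ yields the claim.

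I expect the only delicate point to be bookkeeping rather than genuine mathematics: one must check carefully that ``$f$ admits $f_*(x)$'' is literally the conjunction of conditions~(1) and~(2) of Proposition~\ref{prop2.7}; that Proposition~\ref{prop2.8} needs its neighborhood-base hypothesis at \emph{every} point, which is exactly why bi-bounded completeness is invoked to upgrade $\mathcal{N}_{x*}(f)\neq\emptyset$ to ``$\mathcal{N}_{x*}(f)$ is a neighborhood base of $x$''; and that the dualization swaps $\ll$ with $\ll_d$ and $\mathcal{N}_{x*}$ with $\mathcal{N}^*_x$ consistently, so that no hidden asymmetry between the two relations is introduced.
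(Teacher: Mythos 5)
The paper does not actually prove Proposition~\ref{prop2.9}: it is quoted verbatim from Yamazaki's paper \cite{Yama}, so there is no in-paper argument to compare yours against. Judged on its own terms, your derivation from Propositions~\ref{prop2.7} and~\ref{prop2.8} is correct and is the natural way to recover the statement from the other imported results. In the forward direction, lower semi-continuity gives ``$f$ admits $f_*(x)$'' at every point, which is literally conditions (1) and (2) of Proposition~\ref{prop2.7}; part (c) (bi-bounded completeness) upgrades this to condition (3), so the standing hypotheses of Proposition~\ref{prop2.8} hold and its equivalence $(1)\Leftrightarrow(2)$ (continuity of $P$) yields openness of $\{x: a\ll f(x)\}$. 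In the converse, the extra hypothesis $\mathcal{N}_{x*}(f)\neq\emptyset$ is exactly condition (2) of Proposition~\ref{prop2.7}, and part (c) again supplies both ``admits $f_*$'' and the neighborhood-base condition, after which $(2)\Rightarrow(1)$ of Proposition~\ref{prop2.8} finishes. Your dualization to handle the upper/dually-continuous half is also legitimate: the paper itself records that $x\ll_d y\Leftrightarrow y\ll^{op}x$, that dual continuity of $P$ means continuity of $P^{op}$, and the definitions of $f^*$ and $\mathcal{N}^*_x(f)$ are the $P^{op}$-versions of $f_*$ and $\mathcal{N}_{x*}(f)$, while bi-bounded completeness is self-dual. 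The only cosmetic caveat is that Proposition~\ref{prop2.8} as transcribed in this paper is garbled (its hypothesis sentence is duplicated), but its intended content is clear and your use of it is faithful.
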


We call a point $z_0$ of a  poset $P$ is
a $\ll_d$-{\it increasing} $\ll-${\it limit point} \cite{Yama} if there exists a sequence $\{y_i:i\in\mathbb{N}\}\subset P$ such that
$y_i\ll_d y_{i+1}$, $y_i\ll y_0 (i\in \mathbb{N})$ and $y_0=\bigvee_{i\in\mathbb{N}}y_i$.
Also, $y_0$ of $P$ is a $\ll-${\it decreasing} $\ll_d-$ {\it limit point} if there exist $y_i\in P (i\in\mathbb{N})$ such that $y_{i+1}\ll y_i$, $y_0\ll_d y_i (i\in\mathbb{N})$ and $y_0=\bigwedge_{i\in\mathbb{N}}y_i$.

We introduce the following:

\begin{definition}\cite{Yama}
For a poset $(P,\leq)$ and $y, z\in P$, $y$ is a lower bound (resp. upper bound) of $f:X\rightarrow P$ if $y$ is a lower (resp. upper) bound of $f(X)$.
\end{definition}

\begin{definition}
For a topological space $X$ and a bi-bounded complete, dually continuous poset $P$ with a $\ll_{d}$-increasing $\ll$-limit point $y_0$, a map $f:X\rightarrow P$ is called locally upper bounded about $(y_i)_{i\in\mathbb{N}}$ if for every $x\in X$ there exist $n\in\mathbb{N}$ and a neighborhood $O_x$ of $x$ such that $f(x')\leq y_n$ for each $x'\in O_x$.
\end{definition}

For a mapping $g:$ $X\rightarrow P$, define$$U_g=\{x\in X: g(x) \text{~is locally upper bounded about $(y_i)_{i\in\mathbb{N}}$ ~} \}.$$
Clearly, $U_g$ is an open set in $X$.
$$F_g=\{x\in X: \exists n\in\mathbb{N} \text{~such that~} g(x)\leq y_n\}.$$

\section{main results}

\begin{theorem}\label{the3.1}
Let $P$ be a bi-bounded complete, continuous poset $P$ with a $\ll_{d}$-increasing $\ll$-limit point $y_0$.
Then $X$ is stratifiable if and only if
there exists an operators $\Phi$ assigning to each map $g:X\rightarrow P$ with $F_g\neq\emptyset$,
a l.s.c. map $\Phi(g):X\rightarrow P$ such that
$g\leq \Phi(g)$, $\Phi(g)$ is locally upper bounded about $(y_i)_{i\in\mathbb{N}}$ at each $x\in U_g$ and that $\Phi(g)\leq \Phi(g')$ whenever $g\leq g'$.
\end{theorem}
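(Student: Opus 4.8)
The strategy is to run both implications through Lemma~\ref{lem2.2}, the description of stratifiability by a monotone operator $F$ turning each increasing sequence of open sets $(U_j)_{j\in\mathbb N}$ into an increasing sequence of closed sets $(F(n,(U_j)))_n$ with $F(n,(U_j))\subseteq U_n$ and $\bigcup_n\mathrm{Int}\,F(n,(U_j))=\bigcup_nU_n$. Passing to a subsequence of the witnessing sequence $(y_i)$ and relabelling, we may assume $(y_i)$ is strictly increasing; this is harmless, since it changes neither $y_0$, nor the notion of being locally upper bounded about $(y_i)$, nor the sets $U_g,F_g$. In particular $y_n<y_0$, hence $y_0\not\le y_n$ for all $n$.

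\emph{Sufficiency.} Assume the operator $\Phi$ exists. Given an increasing sequence of open sets $(U_j)_j$, put $W=\bigcup_jU_j$; if $W=\emptyset$ set $F(n,(U_j))=\emptyset$, and otherwise define $g\colon X\to P$ by $g(x)=y_{n(x)}$ for $x\in W$, where $n(x)=\min\{j:x\in U_j\}$, and $g(x)=y_0$ for $x\notin W$. Using strict monotonicity of $(y_i)$ and $y_0\not\le y_n$ one checks $F_g=U_g=W$, so $h:=\Phi(g)$ is defined. Set $F(n,(U_j))=\{x\in X:h(x)\le y_n\}$. This set is closed: $h$ is l.s.c., so each $\mathcal N_{x*}(h)$ is a neighbourhood base at $x$ by Proposition~\ref{prop2.7}(c), whence Proposition~\ref{prop2.8}, $(1)\Rightarrow(3)$, applies; it is increasing in $n$ since $y_n\le y_{n+1}$; it lies in $U_n$, since $g\le h$ forces $g(x)\le y_n$, which excludes $x\notin W$ and gives $y_{n(x)}\le y_n$, i.e.\ $n(x)\le n$, i.e.\ $x\in U_n$; and $\bigcup_n\mathrm{Int}\,F(n,(U_j))=W$, because for $x\in W=U_g$ local upper boundedness of $h$ about $(y_i)$ at $x$ yields a neighbourhood $O$ with $h|_O\le y_n$ for some $n$, so $x\in O\subseteq F(n,(U_j))$, while $\mathrm{Int}\,F(n,(U_j))\subseteq U_n\subseteq W$. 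Monotonicity of the operator $F$ follows from that of $\Phi$: if $U_n\subseteq G_n$ for all $n$, the corresponding maps satisfy $g_G\le g_U$, so $\Phi(g_G)\le\Phi(g_U)$ and $F(n,(U_j))\subseteq F(n,(G_j))$. Lemma~\ref{lem2.2} gives that $X$ is stratifiable.

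\emph{Necessity.} Assume $X$ stratifiable and let $g\colon X\to P$ with $F_g\neq\emptyset$. Apply the operator $F$ of Lemma~\ref{lem2.2} to the increasing open sequence $U_n=\mathrm{Int}\{x:g(x)\le y_n\}$, for which $\bigcup_nU_n=U_g$, obtaining increasing closed sets $C_n=F(n,(U_j))$ with $C_n\subseteq U_n$ and $\bigcup_n\mathrm{Int}\,C_n=U_g=\bigcup_nC_n$. Build $\Phi(g)$ as a ``$(y_i)$-ladder'' over the $C_n$: setting $C_0=\emptyset$, put
\[
\Phi(g)(x)=\bigvee\bigl(\{g(x)\}\cup\{\,y_n:x\notin C_{n-1}\,\}\bigr)\quad\text{when }g(x)\le y_0,
\]
the join existing by bi-bounded completeness as the set is bounded above by $y_0$, and $\Phi(g)(x)=g(x)$ otherwise (these $x$ lie outside $U_g$). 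Then $\Phi(g)\ge g$ is clear; on $\mathrm{Int}\,C_k$ one has $g\le y_k$ and $x\notin C_{n-1}\Rightarrow n\le k$, hence $\Phi(g)\le y_k$ there, so $\Phi(g)$ is locally upper bounded about $(y_i)$ at each point of $U_g=\bigcup_k\mathrm{Int}\,C_k$; and order preservation follows from monotonicity of $F$, since $g\le g'$ gives $\{g'\le y_n\}\subseteq\{g\le y_n\}$, so $U_n(g')\subseteq U_n(g)$, $C_n(g')\subseteq C_n(g)$, and the displayed join is monotone in both $g$ and the $C_n$.

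The step I expect to be the main obstacle is showing $\Phi(g)$ lower semi-continuous. I would verify, via Proposition~\ref{prop2.9} (applicable since $P$ is bi-bounded complete and continuous), that $\{x:a\ll\Phi(g)(x)\}$ is open for each $a\in P$ and that $\mathcal N_{x*}(\Phi(g))\neq\emptyset$ for each $x$. The key point is that the $C_n$ are \emph{closed}: if $x\notin C_{m-1}$ then some neighbourhood of $x$ misses $C_{m-1}$, and on it the ladder value is $\ge y_m$, which prevents $\Phi(g)$ from dropping and hence keeps $\Phi(g)_*(x)$ from falling below $\Phi(g)(x)$; on $\mathrm{Int}\,C_k$ this already gives $\mathcal N_{x*}(\Phi(g))\neq\emptyset$ since $\Phi(g)\ge y_1$ there. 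The delicate cases are the points of $\overline{U_g}\setminus U_g$ — where, since $x$ escapes every closed $C_n$, one gets $\Phi(g)_*(x)\ge\bigvee_ny_n=y_0$ — and, more subtly, points where $g$ leaves the scale $(y_i)$; handling the interplay there, together with the openness of $\{x:a\ll\Phi(g)(x)\}$ (an argument tracking which $C_n$ contain $x$, using continuity of $P$ and Lemma~\ref{lem2.6}), is where the real work lies, and closedness of the $C_n$ is exactly what I expect to carry it.
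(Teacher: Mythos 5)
Your sufficiency direction is sound and is essentially the paper's own argument: the step map built from the scale $(y_i)$ over the given increasing open sequence, the level sets $F(n,(U_j))=\{x:\Phi(g_{((U_j))})(x)\le y_n\}$, closedness via Propositions 2.7 and 2.8, and Lemma 2.2. The problem is in the necessity direction, and it is exactly the step you yourself flag as ``the main obstacle'': lower semicontinuity of $\Phi(g)$ is not proved, and with your definition it is in fact false in general. The culprit is the clause $\Phi(g)(x)=g(x)$ at points where $g(x)\not\le y_0$: on that set $\Phi(g)$ coincides with the completely arbitrary input map, so no semicontinuity can be extracted. Concretely, take $P=[0,1]^2$, $y_0=(1,\tfrac12)$, $y_n=(1-2^{-n},\tfrac12-2^{-n-1})$, $z=(0,1)$ (so $z\not\le y_0$ and $y_0\not\le z$), $X=\mathbb R$, and $g\equiv z$ except $g(0)=y_1$. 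Then $F_g=\{0\}\neq\emptyset$, every $U_n(g)$ has empty interior, so all $C_n=\emptyset$; your formula gives $\Phi(g)(0)=\bigvee_n y_n=y_0$ and $\Phi(g)\equiv z$ elsewhere, whence $\Phi(g)_*(0)=y_0\wedge z=(0,\tfrac12)\neq y_0=\Phi(g)(0)$. The same clause also breaks order preservation in general: for $g\le g'$ with $g(x)\le y_0$ but $g'(x)\not\le y_0$ you would need $y_n\le g'(x)$ for every $n$ with $x\notin C_{n-1}(g)$, and nothing forces even $y_1\le g'(x)$.

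On the set $\{x:g(x)\le y_0\}$ your join formula reproduces the paper's case-by-case definition exactly, so the reformulation buys nothing there; the paper instead sets $\Phi(g)\equiv y_0$ on all of $X\setminus\bigcup_nC_n$ irrespective of $g$. That choice is what makes l.s.c.\ a purely combinatorial matter about the closed sets $C_n$: on the open set $X\setminus C_m$ one has $\Phi(g)\ge y_{m+1}$, and for $x$ outside every $C_n$ one takes the supremum over $m$ of these lower bounds (not the set $X\setminus\bigcup_nC_n$, which need not be open). The price is that $g\le\Phi(g)$ then needs $g\le y_0$, which the hypothesis $F_g\neq\emptyset$ does not literally give; your instinct that points where $g$ leaves the scale are problematic is correct, but the repair is to assume $g\le y_0$ (as in Theorem 3.2), not to set $\Phi(g)=g$ there. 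A further minor point: your opening reduction ``pass to a strictly increasing subsequence of $(y_i)$'' is not always available, since $y_i\ll_d y_{i+1}$ only yields $y_i\le y_{i+1}$ and the sequence may be eventually constant, in which case $y_0\not\le y_n$ fails and your identification $F_g=U_g=W$ in the sufficiency direction breaks down.
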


\begin{proof}
Assume that $X$ is a stratifiable.
There exists an operator $F$ satisfying (i), (ii)' and (iii) in Lemma \ref{lem2.2}.
Let $P$ be a bi-bounded complete, continuous poset $P$ with a $\ll_{d}$-increasing $\ll$-limit point $y_0$.
For each map $g:X\rightarrow P$ with $F_g\neq\emptyset$ and each $n\in \mathbb{N}$, define
$$ U_n(g)=Int\{x\in X: g(x)\leq y_n\}  \quad\quad \quad\quad(\ref{the3.1}.1)$$
Then we have $U_g=\bigcup_{n\in \mathbb{N}}U_n(g).$
In fact, for each $x\in U_g$, then there exists an open neighborhood $O$ of $x$ such that $g(x')\leq y_i $ for some $i\in \mathbb{N}$ and each $x'\in O$, which implies that $ x\in U_i(g)$. It implies that $U_g\subseteq\bigcup_{n\in \mathbb{N}}U_n(g).$
On the other hand, take any $x\in \bigcup_{n\in \mathbb{N}}U_n(g)$.
Then there is $U_j(g)$ such that $x\in U_j(g)$, and therefore, there exists an open neighborhood $O$ of $x$ such that $ g(x')\leq y_j$ for each $x'\in O$.
It implies that $x\in O\subseteq U_g$.

Hence, $F((U_j(g)))=(F(n,(U_j(g))))_{n\in\mathbb{N}}$ is a sequence of closed subsets of $X$ such that
$$F(n,(U_j(g)))\subset U_n(g) \text{~for each~} n\in\mathbb{N};\quad\quad \quad\quad(\ref{the3.1}.2)$$
$$\bigcup_{n\in\mathbb{N}}Int F(n, (U_j(g)))=\bigcup_{n\in \mathbb{N}}U_n(g);\quad\quad \quad\quad(\ref{the3.1}.3)$$
$$F(n,(U_j(g)))\subset F(n+1,(U_j(g))), n\in\mathbb{N};\quad\quad \quad\quad(\ref{the3.1}.4)$$
Thus, we can define $\Phi(g):X\rightarrow P$ as follows:

$$\Phi(g)(x)=\left\{
\begin{array}{rcl}
y_1    &      & {x\in F(1,(U_j(g)))}\\
y_{n+1}    &      & x\in F(n+1,(U_j(g)))\setminus F(n,(U_j(g)))\\
y_0    &      &x\in X\setminus \bigcup_{n\in\mathbb{N}}F(n, (U_j(g)))\\
\end{array} \right. \quad\quad \quad\quad(\ref{the3.1}.5)$$
It is obvious that $\Phi(g)(x)\leq y_0$ and $\Phi(g)$ has a lower bounded $y_1$.

To show $g\leq\Phi(g)$. For each $x\in X\setminus U_g$, $g(x)\leq y_0=\Phi(g)(x)$ is obvious. Let $x\in U_\varphi $.
Then, $\Phi(\varphi)(x)=y_{i}$ for some $i\in \mathbb{N}$, and therefore, $x\in F(i,(U_j(g)))\setminus F(i-1,(U_j(g)))$.
Since $g(x)\leq y_{i}=\Phi(g)(x)$ by $x\in F(i,(U_j(g)))\subset U_{i}(g)=Int\{x\in X: g(x)\leq y_{i}\}$.
If $x\in F(1,(U_j(g)))$, it is obvious that $g(x)\leq y_{1}=\Phi(g)(x)$. Thus, we have $g\leq\Phi(g)$.

To show that  $\Phi(g):X\rightarrow P$ is locally upper bounded about $(y_i)_{i\in\mathbb{N}}$ at each $x\in U_g$. Take any $x\in U_g$,
by (\ref{the3.1}.3), there exists $n\in\mathbb{N}$ such that $x\in Int F(n+1, (U_j(g)))\setminus Int F(n, (U_j(g)))$.
Consider the neighborhood $O_x=Int F(n+1, (U_j(g)))$ of $x$.
For each $x'\in O_x$, it follows from the definition of $\Phi(g)$ that $\Phi(g)(x')\leq y_{n+1}$.
This completes the
proof that $\Phi(g)$ is locally upper bounded about $(y_i)_{i\in\mathbb{N}}$ at each $x\in U_g$.

Next we show $\Phi(g)$ is l.s.c..
For each $x\in \bigcup_{n\in\mathbb{N}} F(n, (U_j(g)))$, there exists some $m\in\mathbb{N}$ such that $x\in F(m+1, (U_j(g)))\setminus F(m, (U_j(g)))$.
We consider the neighborhood $O_x=X\setminus F(m, (U_j(g)))$ of $x$.
For each $x'\in O_x$, we can get $\Phi(g)(x')\geq y_{m+1}=\Phi(g)(x)$.
Therefore, $\bigwedge\Phi(g)(O_x)=y_{m+1}$ and $O_x\in \mathcal{N}_{x*}(\Phi(g))$.
This provides that $\Phi(g)(x)$ admits $\Phi(g)_*(x)$ because of (c) of Proposition \ref{prop2.7}.
We have
$$\Phi(g)_*(x)=\bigvee_{N\in\mathcal{N}_{x*}(\Phi(g))}\bigwedge\Phi(g)(N)\geq\bigwedge_{x'\in O_x}\Phi(g)(x')=y_{m+1}=\Phi(g)(x).$$
Hence, $\Phi(g)_*(x)=\Phi(g)(x)$ for each $x\in X$, that is, $\Phi(g)(x)$ is l.s.c. at $x$.
For each $x\in X\setminus\bigcup_{n\in\mathbb{N}} F(n, (U_j(g)))$, we consider the neighborhood $V=X\setminus\bigcup_{n\in\mathbb{N}} F(n, (U_j(g)))$ of $x$.
For each $x'\in V$, we can get $\Phi(g)(x')= y_{0}=\Phi(g)(x)$.
Therefore, $\bigwedge\Phi(g)(V)=y_{o}$ and $V\in \mathcal{N}_{x*}(\Phi(g))$.
This provides that $\Phi(g)(x)$ admits $\Phi(g)_*(x)$ because of (c) of Proposition \ref{prop2.7}.
We have
$$\Phi(g)_*(x)=\bigvee_{N\in\mathcal{N}_{x*}(\Phi(g))}\bigwedge\Phi(g)(N)\geq\bigwedge_{x'\in O_x}\Phi(g)(x')=y_{0}=\Phi(g)(x).$$
Hence, $\Phi(g)_*(x)=\Phi(g)(x)$ for each $x\in X$, that is, $\Phi(g)(x)$ is l.s.c. .

Finally, let $g':X\rightarrow P$ be a map with $g\leq g'$.
Then
$$\{x\in X: g'(x)\leq y_n\}\subseteq \{x\in X: g(x) \leq y_n\}$$
and hence, $U_n(g)\supseteq U_n(g')$ for each $n\in\mathbb{N}$.
Therefore we have $F(n,(U_j(g)))\supseteq F(n,(U_j(g')))$ for each $n\in\mathbb{N}$, and

$$\bigcup_{n\in\mathbb{N}}Int F(n, (U_j(g)))=\bigcup_{n\in \mathbb{N}}U_n(g)$$
$$\bigcup_{n\in\mathbb{N}}Int F(n, (U_j(g')))=\bigcup_{n\in \mathbb{N}}U_n(g')$$

For each $x\in \bigcup_{n\in \mathbb{N}}U_n(g')$,
there exists $n\in \mathbb{N}$ such that $x\in F(n+1, (U_j(g')))\setminus F(n, (U_j(g')))$.
That is $x\in F(n+1, (U_j(g')))\subseteq F(n+1, (U_j(g)))$.
By (\ref{the3.1}.5), we can get $\Phi(g)(x)\leq y_{n+1}=\Phi(g')(x)$.
$\Phi(g)(x)\leq y_0= \Phi(g')(x)$ is obvious whenever $x\in X\setminus\bigcup_{n\in \mathbb{N}}U_n(g')$,
which proves the necessity.

Conversely, let $(U_j)_{j\in\mathbb{N}}$ be a sequence of increasing open subsets of $X$.
Define a map $g_{((U_j))}:X\rightarrow P$ by:
$$g_{((U_j))}(x)=\left\{
\begin{array}{rcl}
y_1     &      & {x\in U_{1}}\\
y_{n+1}    &      & {x\in U_{n+1}\backslash \texttt{}U_{n}}\\
y_0   &      & {x\in X\setminus\bigcup_{n\in\mathbb{N}}U_n}\\
\end{array} \right. \quad\quad \quad\quad(\ref{the3.1}.6)$$
Then, we have $U_{g_{((U_j))}}=\bigcup_{n\in\mathbb{N}}U_n$, where
$$U_{g_{((U_j))}}=\{x\in X: g_{((U_j))} \text{~is locally upper bounded about $(y_i)_{i\in\mathbb{N}}$ at~}x \}.$$
By the assumption, there exist an operators $\Phi$ assigning to each $g_{((U_j))}$ with an upper bound,
a l.s.c. map $\Phi(g_{((U_j))}):X\rightarrow P$ such that
$g_{((U_j))}\leq \Phi(g_{((U_j))})$, $\Phi(g_{((U_j))})(x)$ is locally upper bounded about $(y_i)_{i\in\mathbb{N}}$ at each $x\in$ and $\Phi(g)\leq \Phi(g')$ whenever $g\leq g'$.
For each sequence $(U_j)_{j\in\mathbb{N}}$ of increasing open subsets of $X$, define

$$F(n,(U_j))=\{x\in X: \Phi(g_{((U_j))})(x)\leq y_n\}\quad\quad \quad\quad(\ref{the3.1}.7)$$
We can get that $F(n,(U_j))$ is closed, by (a) of Proposition \ref{prop2.7} and Proposition \ref{prop2.8}.
It suffices to show the operator $F$ satisfies (i), (ii)' and (iii) of Lemma \ref{lem2.2}.

To see $U_n \supseteq F(n,(U_j))$ for each $n\in\mathbb{N}$ and let $x\in F(n,(U_j))$.
Then, $g_{((U_j))}(x)\leq\Phi(g_{((U_j))})(x)\leq y_n$ and $g_{((U_j))}(x)\leq y_n$, and thus $x\in U_{m}\setminus U_{m-1}$ and $m\leq n$ by (\ref{the3.1}.1).
So we have $x\in U_m\subset U_n$.
Hence $U_n \supseteq F(n,(U_j))$ holds.
In additional, $\Phi(g_{((U_j))})$ is l.s.c., so $F(n,(U_j))$ is a closed set of $X$ for each $n\in \mathbb{N}$. This shows that the condition (i) is satisfied.

To show (2)', note that $\Phi(g_{((U_j))})$ is locally upper bounded about $(y_i)_{i\in\mathbb{N}}$ at each $x\in U_{g_{((U_j))}}$.
Then, for each $x\in U_{g_{((U_j))}}$, there exists an open neighborhood $O$ of $x$ such that $\Phi(g_{((U_j))})(x')\leq y_{n_0}$ for some $n_0\in \mathbb{N}$ and each $x'\in O$.
It implies that $x\in IntF(n,(U_j))$.
Hence, $\bigcup_{n\in\mathbb{N}}Int F(n, (U_j(g)))=U_{g_{((U_j))}}=\bigcup_{n\in \mathbb{N}}U_n(g)$.

To show (3),
let $((G_j))$ be an increasing sequence of open subsets of $X$ such that $(U_j)\preceq (G_j)$.
Since $U_n\subseteq G_n$ for each $n\in \mathbb{N}$, it follows from (\ref{the3.1}.6) that $g_{((G_j))}(x)\leq g_{((U_j))}(x)$.
Hence, we have $\Phi(g_{((G_j))})\leq\Phi(g_{((U_j))})$.
Furthermore,
 $F(n,(U_j))=\{x\in X: \Phi(g_{((U_j))})(x)\leq y_n\}\subseteq \{x\in X: \Phi(g_{((G_j))})(x)\leq y_n\}=F(n,(G_j))$ for each $n\in\mathbb{N}$,
which implies that $F((U_j))\preceq F((G_j))$.
Thus, $X$ is a stratifiable space.
\end{proof}

\begin{theorem}\label{the3.2}
Let $P$ be a bi-bounded complete, continuous poset $P$ with a $\ll_{d}$-increasing $\ll$-limit point $y_0$.
Then $X$ is semi-stratifiable if and only if
there exists an operators $\Phi$ assigning to each map $g:X\rightarrow P$ with an upper bound $y_0$,
a l.s.c. map $\Phi(g):X\rightarrow P$ such that
$g\leq \Phi(g)$, $\Phi(g)$ is upper bounded at each $x\in U_g$ and that $\Phi(g)\leq \Phi(g')$ whenever $g\leq g'$.
\end{theorem}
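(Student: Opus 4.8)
The plan is to imitate the proof of Theorem~\ref{the3.1} almost line for line, with two systematic changes: I would use the semi-stratification operator of Lemma~\ref{lem2.2} (conditions (i), (ii) and (iii)) in place of the stratification operator (conditions (i), (ii)' and (iii)), and I would only require $\Phi(g)(x)$ to be upper bounded --- that is, $\Phi(g)(x)\le y_n$ for some $n\in\mathbb{N}$ --- at each $x\in U_g$, rather than locally upper bounded about $(y_i)_{i\in\mathbb{N}}$.

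For necessity, assuming $X$ semi-stratifiable I would take $F$ as in Lemma~\ref{lem2.2}; for a map $g\colon X\rightarrow P$ with upper bound $y_0$ set $U_n(g)=Int\{x:g(x)\le y_n\}$, so that $U_g=\bigcup_n U_n(g)$ exactly as in Theorem~\ref{the3.1}, apply $F$ to $(U_j(g))_j$ to obtain an increasing sequence of closed sets with $F(n,(U_j(g)))\subseteq U_n(g)$ and $\bigcup_n F(n,(U_j(g)))=\bigcup_n U_n(g)=U_g$, and define $\Phi(g)$ by the three-case formula $(\ref{the3.1}.5)$. The verifications then follow the pattern of Theorem~\ref{the3.1}: $y_1\le\Phi(g)\le y_0$ pointwise (so $\Phi(g)$ admits $\Phi(g)_*$ by Proposition~\ref{prop2.7}); $g\le\Phi(g)$ by treating separately the ``shells'' $F(k,(U_j(g)))\setminus F(k-1,(U_j(g)))$ (where $F(k,(U_j(g)))\subseteq U_k(g)\subseteq\{x:g(x)\le y_k\}$) and the complement of $\bigcup_n F(n,(U_j(g)))$ (where $g\le y_0$); the upper-boundedness clause because on $U_g=\bigcup_n F(n,(U_j(g)))$ the value $\Phi(g)(x)$ is one of the $y_k$; and order preservation because $g\le g'$ gives $U_n(g)\supseteq U_n(g')$, hence $F(n,(U_j(g)))\supseteq F(n,(U_j(g')))$, hence $\Phi(g)\le\Phi(g')$.

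For sufficiency, given $\Phi$ I would reverse the construction: for an increasing sequence of open sets $(U_j)_j$ form $g_{((U_j))}$ by $(\ref{the3.1}.6)$ (it has upper bound $y_0$, and $U_{g_{((U_j))}}=\bigcup_n U_n$ as in Theorem~\ref{the3.1}), apply $\Phi$, and set $F(n,(U_j))=\{x:\Phi(g_{((U_j))})(x)\le y_n\}$. Then $F(n,(U_j))$ is closed (Propositions~\ref{prop2.7} and \ref{prop2.8}) and increasing in $n$; it lies inside $U_n$ because $g_{((U_j))}\le\Phi(g_{((U_j))})$ forces $g_{((U_j))}(x)\le y_n$ there, whence $x\in U_n$ by $(\ref{the3.1}.6)$; and $\bigcup_n F(n,(U_j))=\bigcup_n U_n$ because the upper-boundedness clause puts each $x\in\bigcup_n U_n=U_{g_{((U_j))}}$ into some $F(k,(U_j))$, the reverse inclusion being what was just shown. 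Monotonicity (iii) holds since $(U_j)\preceq(G_j)$ gives $g_{((G_j))}\le g_{((U_j))}$, hence $\Phi(g_{((G_j))})\le\Phi(g_{((U_j))})$, hence $F(n,(U_j))\subseteq F(n,(G_j))$. Lemma~\ref{lem2.2} then gives that $X$ is semi-stratifiable.

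The step I expect to be the main obstacle --- and essentially the only one requiring more than a mechanical transcription of Theorem~\ref{the3.1} --- is the lower semicontinuity of $\Phi(g)$ at points $x\in X\setminus\bigcup_n F(n,(U_j(g)))$. In the semi-stratifiable setting one has only $\bigcup_n F(n,(U_j(g)))=\bigcup_n U_n(g)$, with no interiors available, so this set must not be treated as a neighbourhood of its points; instead, for each $k$ the set $X\setminus F(k,(U_j(g)))$ is an open neighbourhood of $x$ on which, by $(\ref{the3.1}.5)$ and the inclusion $F(k-1,(U_j(g)))\subseteq F(k,(U_j(g)))$, one has $\Phi(g)\ge y_{k+1}$; hence $\Phi(g)_*(x)\ge\bigvee_k y_{k+1}=\bigvee_i y_i=y_0=\Phi(g)(x)$, while $\Phi(g)_*(x)\le\Phi(g)(x)$ is automatic. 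A minor secondary point is that the inclusion $F(n,(U_j))\subseteq U_n$ in the converse uses $y_m\le y_n\Rightarrow m\le n$; one secures this by replacing $(y_i)$, if necessary, by a strictly increasing subsequence, which remains a $\ll_d$-increasing $\ll$-limit point of $y_0$ by Lemma~\ref{lem2.6}(2).
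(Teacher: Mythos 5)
Your proposal follows essentially the same route as the paper: the paper's proof of Theorem~\ref{the3.2} is exactly ``repeat Theorem~\ref{the3.1} with condition (ii) of Lemma~\ref{lem2.2} in place of (ii)$'$, and check pointwise upper boundedness on $U_g$ via $U_g=\bigcup_n F(n,(U_j(g)))$,'' which is what you do. The one place you genuinely diverge is the lower semicontinuity of $\Phi(g)$ at points $x\in X\setminus\bigcup_n F(n,(U_j(g)))$: the paper simply defers to Theorem~\ref{the3.1}, whose argument there uses $V=X\setminus\bigcup_n F(n,(U_j(g)))$ as a neighbourhood of $x$, which is unjustified (that set is closed, not open, since $\bigcup_n F(n,(U_j(g)))=U_g$). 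Your replacement --- using the open sets $X\setminus F(k,(U_j(g)))$ for each $k$, on which $\Phi(g)\geq y_{k+1}$, and then $\Phi(g)_*(x)\geq\bigvee_k y_{k+1}=y_0$ --- is the correct argument and repairs this gap. Your closing remark about needing $y_m\leq y_n\Rightarrow m\leq n$ (secured by passing to a strictly increasing subsequence) likewise addresses a point the paper uses tacitly in the converse direction.
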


\begin{proof}
Assume that $X$ is a semi-stratifiable.
There exists an operator $F$ satisfying (i), (ii) and (iii) in Lemma \ref{lem2.2}.
Let $P$ be a bi-bounded complete, continuous poset $P$ with a $\ll_{d}$-increasing $\ll$-limit point $y_0$.
For each map $g:X\rightarrow P$ and each $n\in \mathbb{N}$, define $U_n(g)$ as (\ref{the3.1}.1) in Theorem \ref{the3.1}.

Then we have $U_g=\bigcup_{n\in \mathbb{N}}U_n(g).$
Define $\Phi(g):X \rightarrow P$ as (\ref{the3.1}.5) in Theorem \ref{the3.1}.

We only show that $\Phi(g)(x)$ is upper bounded at each $x\in U_g$, since the other properties of $\Phi(g)$ are proved in Theorem \ref{the3.1}.

Take any $x\in U_g$, by (ii) of lemma \ref{lem2.2}, we have $x\in \bigcup_{n\in \mathbb{N}}U_n(g)=\bigcup_{n\in\mathbb{N}}F(n, (U_j(g)))$, and therefore, there exists $k\in \mathbb{N}$ such that $x\in F(k, (U_j(g)))\setminus F(k-1, (U_j(g)))$.
It implies that $\Phi(g)(x)=y_k$.
This completes the
proof that $\Phi(g)(x)$ is upper bounded at each $x\in U_g$.

Conversely, let $(U_j)_{j\in\mathbb{N}}$ be a sequence of increasing open subsets of $X$.
Define a map $g_{((U_j))}:X\rightarrow P$ as (\ref{the3.1}.6) in Theorem \ref{the3.1}.

Then, we have $U_{g_{((U_j))}}=\bigcup_{n\in\mathbb{N}}U_n$, where
$$U_{g_{((U_j))}}=\{x\in X: g_{((U_j))} \text{~is locally upper bounded at~}x \}.$$
By the assumption, there exist an operators $\Phi$ assigning to each $g_{((U_j))}$ with an upper bound,
a l.s.c. map $\Phi(g_{((U_j))}):X\rightarrow P$ such that $\Phi(g_{((U_j))})$ is bounded at each $x\in U_{g_{((U_j))}}$,
$g_{((U_j))}\leq \Phi(g_{((U_j))})$, and that $\Phi(g)\leq \Phi(g')$ whenever $g\leq g'$.
For each sequence $(U_j)_{j\in\mathbb{N}}$ of increasing open subsets of $X$ and each $n\in \mathbb{N}$, define the operator $F$ as (\ref{the3.1}.7) in Theorem \ref{the3.1}.

It suffices to show that the operator $F$ satisfies (i), (ii) and (iii) of Lemma \ref{lem2.2}.
We can get that $F(n,(U_j))$ is closed, by (a) of Proposition \ref{prop2.7} and Proposition \ref{prop2.8}.
It suffices to show the operator $F$ satisfies (i), (ii) and (iii) of Lemma \ref{lem2.2}.
The proof that the operator $F$ satisfies (i) and (iii) of Lemma \ref{lem2.2} is as same as Theorem \ref{the3.1}, so we only shows that the operator $F$ satisfies (ii) of Lemma \ref{lem2.2}.

To show (ii), note that $\Phi(g_{((U_j))})$ is upper bounded at each $x\in U_{g_{((U_j))}}$.
Then, for each $x\in U_{g_{((U_j))}}$, there exists $n_0\in \mathbb{N}$ such that $\Phi(g_{((U_j))})(x)\leq y_{n_0}$.
It implies that $x\in F(n_0,(U_j))$.
Hence, $\bigcup_{n\in\mathbb{N}} F(n, (U_j(g)))=U_{g_{((U_j))}}=\bigcup_{n\in \mathbb{N}}U_n(g)$.
Thus, $X$ is a semi-stratifiable space.
\end{proof}

\begin{theorem}\label{the3.3}
Let $P$ be a bi-bounded complete, dually continuous poset $P$ with a $\ll_{d}$-increasing $\ll$-limit point $y_0$.
Then $X$ is MCP if and only if
there exists an operators $\Phi$ assigning to each map $g:X\rightarrow P$ which is locally upper bounded about $(y_i)_{i\in\mathbb{N}}$ with a lower bound,
a l.s.c. map $\Phi(g):X\rightarrow P$ which is locally upper bounded about $(y_i)_{i\in\mathbb{N}}$ such that
$g\leq \Phi(g)$, and $\Phi(g)\leq \Phi(g')$ whenever $g\leq g'$.
\end{theorem}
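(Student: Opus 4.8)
The plan is to follow the same scheme used to prove Theorems \ref{the3.1} and \ref{the3.2}, but with the operator of Lemma \ref{lem2.5} (in its MCP form, i.e.\ carrying the strengthened condition $(2)'$: $\bigcup_{n}\mathrm{Int}\,F(n,(U_j))=X$) replacing the operator of Lemma \ref{lem2.2}. The organizing observation is that every map $g$ in the domain of $\Phi$ is locally upper bounded about $(y_i)_{i\in\mathbb{N}}$ \emph{at each point} of $X$, so that $U_g=X$; this is exactly what couples our setting to Lemma \ref{lem2.5}, whose open sequences cover the whole space.

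For necessity, I would take the operator $F$ from Lemma \ref{lem2.5} and, given such a $g$, set $U_n(g)=\mathrm{Int}\{x\in X: g(x)\leq y_n\}$ as in $(\ref{the3.1}.1)$. Since $y_n\ll_d y_{n+1}$ gives $y_n\leq y_{n+1}$ (Lemma \ref{lem2.6}(1)), the sequence $(U_n(g))_{n}$ is increasing, and as in Theorem \ref{the3.1} its union equals $U_g=X$. Feeding $(U_j(g))_{j}$ into $F$ produces increasing closed sets $F(n,(U_j(g)))\subseteq U_n(g)$ with $\bigcup_{n}\mathrm{Int}\,F(n,(U_j(g)))=X$, so the $y_0$-branch of the level formula $(\ref{the3.1}.5)$ is vacuous; I would define $\Phi(g)$ by that formula. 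That $g\leq\Phi(g)$, that $\Phi(g)$ is l.s.c.\ (using Proposition \ref{prop2.7}(c) together with the neighborhoods $X\setminus F(m,(U_j(g)))$), and that $g\leq g'$ forces $U_n(g')\subseteq U_n(g)$, hence $F(n,(U_j(g')))\subseteq F(n,(U_j(g)))$ by Lemma \ref{lem2.5}(3), hence $\Phi(g)\leq\Phi(g')$, would be copied essentially verbatim from Theorem \ref{the3.1}. The only genuinely new check is that $\Phi(g)$ is locally upper bounded about $(y_i)_{i\in\mathbb{N}}$ \emph{on all of} $X$: for $x\in X$, choose $n$ with $x\in\mathrm{Int}\,F(n+1,(U_j(g)))$ (possible by the $(2)'$-identity above) and note that $\Phi(g)\leq y_{n+1}$ on this open neighborhood.

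For sufficiency, given an increasing open cover $(U_j)_{j}$ of $X$ I would define $g_{((U_j))}$ by $(\ref{the3.1}.6)$ (again with empty $y_0$-branch); it has lower bound $y_1$ and is locally upper bounded about $(y_i)_{i\in\mathbb{N}}$ everywhere, since $U_{n+1}$ is a neighborhood on which $g_{((U_j))}\leq y_{n+1}$, so $\Phi$ applies to it. Set $F(n,(U_j))=\{x:\Phi(g_{((U_j))})(x)\leq y_n\}$ as in $(\ref{the3.1}.7)$. Since $\Phi(g_{((U_j))})$ is l.s.c.\ it admits $\Phi(g_{((U_j))})_*$, and because $P$ is lower-bounded complete, $\mathcal{N}_{x*}(\Phi(g_{((U_j))}))$ is a neighborhood base (Proposition \ref{prop2.7}(a)), so $F(n,(U_j))$ is closed by Proposition \ref{prop2.8}$(1)\Rightarrow(3)$. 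Then I would verify the clauses of Lemma \ref{lem2.5}: $(1)$ $F(n,(U_j))\subseteq U_n$ from $g_{((U_j))}\leq\Phi(g_{((U_j))})$ and the shape of $g_{((U_j))}$, as in Theorem \ref{the3.1}; $(2)'$ for $x\in X$, local upper boundedness of $\Phi(g_{((U_j))})$ about $(y_i)_{i\in\mathbb{N}}$ yields a neighborhood $O$ of $x$ and an $n_0$ with $\Phi(g_{((U_j))})(O)\leq y_{n_0}$, whence $O\subseteq F(n_0,(U_j))$ and $x\in\mathrm{Int}\,F(n_0,(U_j))$; $(3)$ monotonicity, since $(U_j)\preceq(G_j)$ gives $g_{((G_j))}\leq g_{((U_j))}$, hence $\Phi(g_{((G_j))})\leq\Phi(g_{((U_j))})$, hence $F(n,(U_j))\subseteq F(n,(G_j))$. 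Lemma \ref{lem2.5} then gives that $X$ is MCP.

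The hard part will be nothing computational — essentially everything is transcribed from Theorems \ref{the3.1} and \ref{the3.2}. The one point demanding care is the interior condition: one must recognize that the \emph{locally} upper bounded conclusion on $\Phi(g)$ (rather than the merely pointwise bounded conclusion appropriate to MCM / semi-stratifiable spaces) is exactly what (a) can be read off, in the necessity direction, from MCP's condition $(2)'$, and (b) forces $\bigcup_n\mathrm{Int}\,F(n,(U_j))=X$ — not just $\bigcup_n F(n,(U_j))=X$ — in the sufficiency direction. A secondary subtlety is the closedness of $F(n,(U_j))$: one must observe that lower semi-continuity by itself already makes $\mathcal{N}_{x*}(\Phi(g_{((U_j))}))$ nonempty, so that lower-bounded completeness of $P$ upgrades it to a neighborhood base and Proposition \ref{prop2.8} applies.
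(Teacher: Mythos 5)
Your proposal is correct and follows essentially the same route as the paper's proof: the same sets $U_n(g)=\mathrm{Int}\{x: g(x)\leq y_n\}$ fed into the operator of Lemma \ref{lem2.5}, the same level-set definition of $\Phi(g)$ (with the $y_0$-branch vacuous since the closed sets cover $X$), and the same converse construction via $g_{((U_j))}$ and $F(n,(U_j))=\{x:\Phi(g_{((U_j))})(x)\leq y_n\}$, with condition $(2)'$ extracted from local upper boundedness exactly as the paper does. The only (harmless) divergence is that you justify the monotonicity of $(U_n(g))_n$ directly from $y_n\leq y_{n+1}$ rather than citing Proposition \ref{prop2.9}, which is if anything cleaner.
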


\begin{proof}
Suppose that $X$ is MCP and $F$ is any operator that satisfies conditions (1), (2)' and (3) of Lemma \ref{lem2.5}.
Let $g:X\rightarrow P$ be a locally upper bounded map.
For each $n\in \mathbb{N}$, we define
$$ U_n(g)=Int\{x\in X: g(x)\leq y_n\}  \quad\quad \quad\quad(\ref{the3.3}.1)$$
Then, $\{U_n(g):n\in\mathbb{N}\}$ is a increasing sequence of open subsets of $X$ because of Proposition \ref{prop2.9}.
It is clear that $\bigcup_{n\in \mathbb{N}}U_n(g)=X$.

Hence, $F((U_j(g)))=(F(n,(U_j(g))))_{n\in\mathbb{N}}$ is a sequence of closed subsets of $X$ such that
$$F(n,(U_j(g)))\subset U_n(g) \text{~for each~} n\in\mathbb{N};\quad\quad \quad\quad(\ref{the3.3}.2)$$
$$\bigcup_{n\in\mathbb{N}}Int F(n, (U_j(g)))=X;\quad\quad \quad\quad(\ref{the3.3}.3)$$
$$F(n,(U_j(g)))\subset F(n+1,(U_j(g))), n\in\mathbb{N};\quad\quad \quad\quad(\ref{the3.3}.4)$$
Thus, we can define $\Phi(f):X\rightarrow P$ as follows:

$$\Phi(g)(x)=\left\{
\begin{array}{rcl}
y_1    &      & {x\in F(1,(U_j(g)))}\\
y_{n+1}    &      & x\in F(n+1,(U_j(g)))\setminus F(n,(U_j(g)))\\
\end{array} \right. \quad\quad \quad\quad(\ref{the3.3}.5)$$
It is obvious that $\Phi(g)(x)\leq y_0$ and $\Phi(g)$ has a lower bounded $y_1$.

Let us show that $g(x)\leq\Phi(g)(x)$ for each $x\in \bigcup_{n\in\mathbb{N}}F(n, (U_j(g)))$.
Also there exists $n\in\mathbb{N} $ such that $x\in F(n+1,(U_j(g)))\setminus F(n,(U_j(g)))$.
Then $g(x)\leq y_{n+1}=\Phi(g)(x)$ by $x\in F(n+1,(U_j(g)))\subset U_{n+1}(g)=Int\{x\in X: g(x)\leq y_{n+1}\}$.
If $x\in F(1,(U_j(g)))$, it is obvious that $g(x)\leq y_{1}=\Phi(g)(x)$. Thus, we have $g\leq\Phi(g)$.

To show that  $\Phi(g):X\rightarrow P$ is locally upper bounded about $(y_i)_{i\in\mathbb{N}}$, let $x\in X$.
By (\ref{the3.3}.3), there exists $n\in\mathbb{N}$ such that $x\in Int F(n+1, (U_j(g)))\setminus Int F(n, (U_j(g)))$.
Consider the neighborhood $O_x=Int F(n+1, (U_j(g)))$ of $x$.
For each $x'\in O_x$, it follows from the definition of $\Phi(g)$ that $\Phi(g)(x')\leq y_{n+1}$.
This completes the
proof that $\Phi(g)$ is locally upper bounded about $(y_i)_{i\in\mathbb{N}}$.

Next we show $\Phi(g)$ is l.s.c..
For each $x\in \bigcup_{n\in\mathbb{N}} F(n, (U_j(g)))$, there exists some $m\in\mathbb{N}$ such that $x\in F(m+1, (U_j(g)))\setminus F(m, (U_j(g)))$.
We consider the neighborhood $O_x=X\setminus F(m, (U_j(g)))$ of $x$.
For each $x'\in O_x$, we can get $\Phi(g)(x')\geq y_{m+1}=\Phi(g)(x)$.
Therefore, $\bigwedge\Phi(g)(O_x)=y_{m+1}$ and $O_x\in \mathcal{N}_{x*}(\Phi(g))$.
This provides that $\Phi(g)(x)$ admits $\Phi(g)_*(x)$ because of (c) of Proposition \ref{prop2.7}.
We have
$$\Phi(g)_*(x)=\bigvee_{N\in\mathcal{N}_{x*}(\Phi(g))}\bigwedge\Phi(g)(N)\geq\bigwedge_{x'\in O_x}\Phi(g)(x')=y_{m+1}=\Phi(g)(x).$$
Hence, $\Phi(g)_*(x)=\Phi(g)(x)$ for each $x\in X$, that is, $\Phi(g)(x)$ is l.s.c..

Finally, let $g':X\rightarrow P$ be a map with $g\leq g'$.
Then
$$\{x\in X: g'(x)\leq y_n\}\subseteq \{x\in X: g(x) \leq y_n\}$$
and hence, $U_n(g)\supseteq U_n(g')$ for each $n\in\mathbb{N}$.
Therefore we have $F(n,(U_j(g)))\supseteq F(n,(U_j(g')))$ for each $n\in\mathbb{N}$, and

$$\bigcup_{n\in\mathbb{N}}Int F(n, (U_j(g)))=\bigcup_{n\in \mathbb{N}}U_n(g)$$
$$\bigcup_{n\in\mathbb{N}}Int F(n, (U_j(g')))=\bigcup_{n\in \mathbb{N}}U_n(g')$$

For each $x\in \bigcup_{n\in \mathbb{N}}U_n(g')$,
there exists $n\in \mathbb{N}$ such that $x\in F(n+1, (U_j(g')))\setminus F(n, (U_j(g')))$.
That is $x\in F(n+1, (U_j(g')))\subseteq F(n+1, (U_j(g)))$.
By (\ref{the3.3}.5), we can get $\Phi(g)(x)\leq y_{n+1}=\Phi(g')(x)$,
which proves the necessity.

Conversely, let $(U_j)_{j\in\mathbb{N}}$ be an increasing open cover of $X$.
Define a map $g_{((U_j))}:X\rightarrow P$ by:

$$g_{((U_j))}(x)=\left\{
\begin{array}{rcl}
y_1     &      & {x\in U_{1}}\\
y_{n+1}    &      & {x\in U_{n+1}\backslash \texttt{}U_{n}}\\
\end{array} \right. \quad\quad \quad\quad(\ref{the3.3}.1)$$
Then, $g_{((U_j))}$ is locally upper bounded. In fact, for each $x\in X$ there exists $n\in \mathbb{N}$ such that $x\in U_{n+1}\setminus U_n$.
Consider the neighborhood $O_x=U_{n+1}$ of $x$.
For each $x'\in O_x$, it follows from the definition of $g_{((U_j))}$ that $g_{((U_j))}(x')\leq y_{n+1}$.
This shows that $g_{((U_j))}$ is locally upper bounded.

By the assumption, there exist an operators $\Phi$ assigning to each locally upper bounded map $g:X\rightarrow P$ with a lower bound,
a l.s.c. map $\Phi(g):X\rightarrow P$ which is locally upper bounded about $(y_i)_{j\in\mathbb{N}}$ such that
$g(x)\leq \Phi(g)$, and $\Phi(g)\leq \Phi(g')$ whenever $g\leq g'$.

For each sequence $(U_j)_{j\in\mathbb{N}}$ of increasing open cover of $X$, define

$$F(n,(U_j))=\{x\in X: \Phi(g_{((U_j))})(x)\leq y_n\}\quad\quad \quad\quad(\ref{the3.3}.2)$$
We can get that $F(n,(U_j))$ is closed, by (a) of Proposition \ref{prop2.7} and Proposition \ref{prop2.8}.
It suffices to show the operator $F$ satisfies (1), (2)' and (3) of Lemma \ref{lem2.5}.

To show (1) and (2), let $(U_j)_{j\in\mathbb{N}}$ be an increasing open cover of $X$.
To see $U_n \supseteq F(n,(U_j))$ for each $n\in\mathbb{N}$ and let $x\in F(n,(U_j))$.
Then, $g(x)\leq\Phi(g_{((U_j))})(x)\leq y_n$ and $g(x)\leq y_n$, and thus $x\in U_{m}\setminus U_{m-1}$ and $m\leq n$ by (\ref{the3.3}.1).
So we have $x\in U_m\subset U_n$.
Hence $U_n \supseteq F(n,(U_j))$ holds.
To show (2)', let $x\in X$, take a neighborhood $O_x$ of $x$ and $n\in\mathbb{N}$ such that $\Phi(g_{((U_j))}(x')\leq y_n$ for each $x'\in O_x$, that is, $x\in Int F(n,(U_j))$,
which shows that $\bigcup_{n\in\mathbb{N} }IntF(n,(U_j))=X$.

To show (3),
let $((G_j))$ be an increasing sequence of open subsets of $X$ such that $(U_j)\preceq (G_j)$.
Since $U_n\subseteq G_n$ for each $n\in \mathbb{N}$, it follows from (\ref{the3.3}.1) that $g_{((G_j))}(x)\leq g_{((U_j))}(x)$.
Hence, we have $\Phi(g_{((G_j))})\leq\Phi(g_{((U_j))})$.
Furthermore,
 $F(n,(U_j))=\{x\in X: \Phi(g_{((U_j))})(x)\leq y_n\}\subseteq \{x\in X: \Phi(g_{((G_j))})(x)\leq y_n\}=F(n,(G_j))$ for each $n\in\mathbb{N}$,
which implies that $F((U_j))\preceq F((G_j))$. This shows that $U$ satisfies (3) of Lemma \ref{lem2.5}.
So $X$ is MCP.
\end{proof}

\begin{theorem}\label{the3.4}
Let $P$ be a bi-bounded complete, dually continuous poset $P$ with a $\ll_{d}$-increasing $\ll$-limit point $y_0$.
Then $X$ is MCM if and only if
there exists an operators $\Phi$ assigning to each map $g:X\rightarrow P$ which is locally upper bounded about $(y_i)_{i\in\mathbb{N}}$ with a lower bound,
a l.s.c. map $\Phi(g):X\rightarrow P$ such that
$g\leq \Phi(g)\ll y_0$ and $\Phi(g)\leq \Phi(g')$ whenever $g\leq g'$.
\end{theorem}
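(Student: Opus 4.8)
The plan is to follow the proof of Theorem~\ref{the3.3} essentially line for line, making two systematic substitutions: replace the MCP operator of Lemma~\ref{lem2.5} (conditions (1), (2)$'$, (3)) by the MCM operator (conditions (1), (2), (3)), and replace the conclusion ``$\Phi(g)$ is locally upper bounded about $(y_i)_{i\in\mathbb N}$'' by the pointwise relation $\Phi(g)\ll y_0$. The underlying reason these two changes travel together is that MCM only guarantees $\bigcup_n F(n,(U_j))=X$ rather than $\bigcup_n\operatorname{Int}F(n,(U_j))=X$, so the expansion we build cannot be forced to be locally upper bounded, but it can be forced to lie way below the $\ll_d$-increasing $\ll$-limit point $y_0$. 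For necessity, assume $X$ is MCM with operator $F$ as in Lemma~\ref{lem2.5}. Given $g:X\to P$ locally upper bounded about $(y_i)_{i\in\mathbb N}$ with a lower bound, set $U_n(g)=\operatorname{Int}\{x\in X:g(x)\le y_n\}$; since $y_n\le y_{n+1}$ by Lemma~\ref{lem2.6}(1) these form an increasing sequence of open sets, and local upper boundedness gives $\bigcup_n U_n(g)=X$. Feeding $(U_j(g))$ to $F$ yields closed sets with $F(n,(U_j(g)))\subseteq U_n(g)$, $F(n,(U_j(g)))\subseteq F(n+1,(U_j(g)))$ and $\bigcup_n F(n,(U_j(g)))=X$, and this last equality is exactly what makes the two-line formula (\ref{the3.3}.5) define $\Phi(g):X\to P$ on all of $X$, with $\Phi(g)(x)=y_n$ for the least $n$ with $x\in F(n,(U_j(g)))$. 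Then $g\le\Phi(g)$, the lower semi-continuity of $\Phi(g)$ (via Proposition~\ref{prop2.7}(c) and the neighbourhood $X\setminus F(m,(U_j(g)))$), and monotonicity of $g\mapsto\Phi(g)$ are all proved exactly as in Theorem~\ref{the3.3}, while the new clause $\Phi(g)\ll y_0$ is immediate: $\Phi(g)(x)$ is one of the $y_n$ and $y_n\ll y_0$ by the definition of a $\ll_d$-increasing $\ll$-limit point. (Note $\Phi(g)$ need not be locally upper bounded, precisely because we only have $\bigcup_n F(n,(U_j(g)))=X$ and not the interior version.)

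For sufficiency, let $(U_j)_{j\in\mathbb N}$ be an increasing open cover of $X$, define $g_{((U_j))}$ by the formula (\ref{the3.3}.1) used in the converse part of Theorem~\ref{the3.3}, observe that $g_{((U_j))}$ is locally upper bounded about $(y_i)_{i\in\mathbb N}$ and has lower bound $y_1$, apply the hypothesised operator $\Phi$, and put $F(n,(U_j))=\{x\in X:\Phi(g_{((U_j))})(x)\le y_n\}$. Closedness of $F(n,(U_j))$ follows from Propositions~\ref{prop2.7}(a) and~\ref{prop2.8}, and conditions (1) and (3) of Lemma~\ref{lem2.5} are verified word for word as in Theorem~\ref{the3.3} (using $g_{((U_j))}\le\Phi(g_{((U_j))})$ and monotonicity of $\Phi$). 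The only step that genuinely uses the new hypothesis is condition (2): for $x\in X$ we have $\Phi(g_{((U_j))})(x)\ll y_0$, and since $\{y_i:i\in\mathbb N\}$ is a chain (hence directed) with $\bigvee_i y_i=y_0$, the way-below relation applied to this directed family produces some $i\in\mathbb N$ with $\Phi(g_{((U_j))})(x)\le y_i$, that is, $x\in F(i,(U_j))$; hence $\bigcup_n F(n,(U_j))=X$, which is condition (2) of Lemma~\ref{lem2.5}. Thus $X$ is MCM.

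I expect the main obstacle to be exactly that last step. In Theorem~\ref{the3.3} condition (2)$'$ came for free from local upper boundedness of $\Phi(g)$, whereas here one must extract a ``finite level'' $i$ with $\Phi(g)(x)\le y_i$ purely from the order-theoretic datum $\Phi(g)(x)\ll y_0$ together with the representation $y_0=\bigvee_i y_i$ of the $\ll_d$-increasing $\ll$-limit point; everything else is a routine transcription of the MCP argument with Lemma~\ref{lem2.5} taken in its (1)--(2)--(3) form, so I would present the proof by carrying out the construction above and then writing ``the remaining verifications are identical to those in the proof of Theorem~\ref{the3.3}.''
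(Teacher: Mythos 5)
Your proposal is correct and follows essentially the same route as the paper: the paper also presents Theorem \ref{the3.4} as the proof of Theorem \ref{the3.3} with the MCM form of Lemma \ref{lem2.5}, the two-line formula for $\Phi(g)$ giving $\Phi(g)(x)=y_n\ll y_0$ pointwise, and in the converse direction the same use of $\Phi(g)(x)\ll y_0=\bigvee_i y_i$ over the directed family $\{y_i\}$ to extract $i$ with $\Phi(g)(x)\le y_i$, yielding $\bigcup_n F(n,(U_j))=X$. The step you flag as the "main obstacle" is exactly the one the paper singles out as well, and your treatment of it matches the paper's.
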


\begin{proof}
The proof is obtained by a modification of that of Theorem \ref{the3.3}. So, we only show the outline of the proof.

Suppose that $X$ is MCM and $F$ is any operator that satisfies conditions (1), (2) and (3) of Lemma \ref{lem2.5}.
Let $g:X\rightarrow P$ be locally upper bounded about $(y_j)_{j\in\mathbb{N}}$.
For each $n\in \mathbb{N}$, we define
$$ U_n(g)=Int\{x\in X: g(x)\leq y_n\}  \quad\quad \quad\quad(\ref{the3.4}.1)$$
Then, $\{U_n(g):n\in\mathbb{N}\}$ is a increasing sequence of open subsets of $X$ because of Proposition \ref{prop2.9}.
It is clear that $\bigcup_{n\in \mathbb{N}}U_n(g)=X$.

Thus, we can define $\Phi(g):X\rightarrow P$ as follows:

$$\Phi(g)(x)=\left\{
\begin{array}{rcl}
y_1    &      & {x\in F(1,(U_j(g)))}\\
y_{n+1}    &      & x\in F(n+1,(U_j(g)))\setminus F(n,(U_j(g)))\\
\end{array} \right. \quad\quad \quad\quad(\ref{the3.4}.2)$$
It is obvious that $\Phi(g)(x)\ll y_0$ and $\Phi(g)$ has a lower bounded $y_1$.

Then, $\Phi(g):X\rightarrow P$ is l.s.c. such that $g\leq\Phi(g)$.
For a locally upper bounded map $g':X\rightarrow P$ with $g\leq g'$, we have $\Phi(g)\leq \Phi(g')$, which proves the necessity.

Conversely, let $(U_j)_{j\in\mathbb{N}}$ be an increasing open cover of $X$.
Define a map $g_{((U_j))}:X\rightarrow P$ by:

$$g_{((U_j))}(x)=\left\{
\begin{array}{rcl}
y_1     &      & {x\in U_{1}}\\
y_{n+1}    &      & {x\in U_{n+1}\backslash \texttt{}U_{n}}\\
\end{array} \right. \quad\quad \quad\quad(\ref{the3.4}.3)$$
Then, $g_{((U_j))}$ is locally upper bounded.

Set $F(n,(U_j))=\{x\in X: \Phi(g_{((U_j))})(x)\leq y_n\}$ for each $n\in\mathbb{N}$.
Then, $U_n \supseteq F(n,(U_j))$ for each $n\in\mathbb{N}$.
Now, let us show that $\bigcup_{n\in\mathbb{N} }F(n,(U_j))=X$.
Let $x\in X$. Since $\Phi(g)\ll y_0=\bigvee\{y_i:i\in\mathbb{N}\}$, and $\{y_i:i\in\mathbb{N}\}$ are directed, there exists $i\in\mathbb{N}$ such that $\Phi(g)(x)\leq y_i$.
That is, $x\in F(i,(U_j))$, which shows that $\bigcup_{n\in\mathbb{N} }F(n,(U_j))=X$.

For an increasing sequence $((G_j))$ of open subsets of $X$ such that $(U_j)\preceq (G_j)$, we have $F((U_j))\preceq F((G_j))$.
So $X$ is MCM.
\end{proof}

\section{Other results}
By analogy with Theorems 3.1 through 3.4, we can prove the following Theorems, which extend some earlier results.

\begin{theorem}
Let $P$ be a bi-bounded complete, dually continuous poset $P$ with a $\ll_{d}$-increasing $\ll$-limit point $y_0$.
Then $X$ is countably paracompact if and only if
there exists an operators $\Phi$ assigning to each map $g:X\rightarrow P$ which is locally upper bounded about $(y_i)_{i\in\mathbb{N}}$ with a lower bound,
a l.s.c. map $\Phi(g):X\rightarrow P$ which is locally upper bounded about $(y_i)_{i\in\mathbb{N}}$ such that
$g\leq \Phi(g)$.
\end{theorem}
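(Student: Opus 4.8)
The plan is to treat this as the non-monotone counterpart of Theorem~\ref{the3.3}, and to obtain it from the proof of that theorem by deleting every clause about order-preservation of the operator and by replacing the monotone characterization of Lemma~\ref{lem2.5} with the classical characterization of countable paracompactness going back to Ishikawa and Dowker, in the following form: \emph{$X$ is countably paracompact if and only if to each increasing open cover $(U_j)_{j\in\mathbb{N}}$ of $X$ one can assign a sequence of closed sets $(F_n)_{n\in\mathbb{N}}$ with $F_n\subseteq U_n$ for all $n$ and $\bigcup_{n\in\mathbb{N}}\mathrm{Int}\,F_n=X$}. I would first note that one may take the $F_n$ increasing (replace $F_n$ by $\bigcup_{k\le n}F_k$: this is still closed, still inside $U_n$ because $(U_j)$ is increasing, and still has $\bigcup_n\mathrm{Int}\,F_n=X$), and that the chain $(y_i)_{i\in\mathbb{N}}$ satisfies $y_n\le y_{n+1}$ for each $n$ by Lemma~\ref{lem2.6}(1); both facts are used below.

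For necessity, given $g:X\rightarrow P$ that is locally upper bounded about $(y_i)_{i\in\mathbb{N}}$ and has a lower bound, I would put $U_n(g)=\mathrm{Int}\{x\in X:g(x)\le y_n\}$ as in $(\ref{the3.3}.1)$. These sets are open by construction, increasing because $y_n\le y_{n+1}$, and cover $X$: local upper boundedness about $(y_i)_{i\in\mathbb{N}}$ gives, for each $x$, some $n$ and an open $O_x\ni x$ contained in $\{x':g(x')\le y_n\}$, so $x\in U_n(g)$. Feeding $(U_n(g))_{n\in\mathbb{N}}$ into the characterization yields an increasing sequence of closed sets $(F(n,(U_j(g))))_{n\in\mathbb{N}}$ with $F(n,(U_j(g)))\subseteq U_n(g)$ and $\bigcup_n\mathrm{Int}\,F(n,(U_j(g)))=X$; I would then define $\Phi(g)$ by formula $(\ref{the3.3}.5)$, which is a well-defined map on $X$ because these closed sets cover $X$. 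The proofs that $g\le\Phi(g)$ (use $F(n,(U_j(g)))\subseteq U_n(g)\subseteq\{x:g(x)\le y_n\}$), that $\Phi(g)$ is locally upper bounded about $(y_i)_{i\in\mathbb{N}}$ (use the neighbourhood $\mathrm{Int}\,F(n+1,(U_j(g)))$), and that $\Phi(g)$ is l.s.c.\ (use the neighbourhood $X\setminus F(m,(U_j(g)))$ of a point of $F(m+1,(U_j(g)))\setminus F(m,(U_j(g)))$, then apply (c) of Proposition~\ref{prop2.7} to see that $\Phi(g)$ admits $\Phi(g)_*$ and that $\Phi(g)_*=\Phi(g)$) are word-for-word those already carried out in Theorem~\ref{the3.3}. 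Since no order-preservation property is asked for, nothing further is needed.

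For sufficiency, given an increasing open cover $(U_j)_{j\in\mathbb{N}}$ of $X$, I would define the step map $g_{((U_j))}:X\rightarrow P$ by $y_1$ on $U_1$ and $y_{n+1}$ on $U_{n+1}\setminus U_n$, exactly as in the second half of the proof of Theorem~\ref{the3.3}; it is locally upper bounded about $(y_i)_{i\in\mathbb{N}}$ (neighbourhood $U_{n+1}$) and has $y_1$ as a lower bound, so it lies in the domain of $\Phi$. Setting $F(n,(U_j))=\{x\in X:\Phi(g_{((U_j))})(x)\le y_n\}$, this is closed by (a) of Proposition~\ref{prop2.7} together with Proposition~\ref{prop2.8}, because $\Phi(g_{((U_j))})$ is l.s.c. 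Then $F(n,(U_j))\subseteq U_n$ follows from $g_{((U_j))}\le\Phi(g_{((U_j))})$ and the definition of $g_{((U_j))}$, as in Theorem~\ref{the3.3}, while $\bigcup_n\mathrm{Int}\,F(n,(U_j))=X$ holds because $\Phi(g_{((U_j))})$ is locally upper bounded about $(y_i)_{i\in\mathbb{N}}$: every $x$ has a neighbourhood on which $\Phi(g_{((U_j))})\le y_n$ for some $n$, hence $x\in\mathrm{Int}\,F(n,(U_j))$. By the characterization above, $X$ is countably paracompact.

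I expect the work to be essentially bookkeeping: the real point is recognizing that discarding condition~(3) of Lemma~\ref{lem2.5} and the order-preservation clause on $\Phi$ corresponds exactly to replacing the monotone (MCP) characterization by the classical one, after which every step of Theorem~\ref{the3.3} transfers verbatim. The one place that will need the same small care as in Theorem~\ref{the3.3} is inferring $x\in U_n$ from $g_{((U_j))}(x)\le y_n$ in the sufficiency part, where one uses that the chain $(y_i)_{i\in\mathbb{N}}$ behaves, for this bookkeeping, like a strictly increasing sequence in its indices; the closedness of the level sets together with the l.s.c.\ computation are the expected routine steps, needing nothing beyond bi-bounded completeness and Propositions~\ref{prop2.7}--\ref{prop2.8}.
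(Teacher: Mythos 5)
Your proposal is correct and follows essentially the same route the paper intends: the paper itself only remarks that Theorem 4.1 ``follows in the same way'' as Theorem \ref{the3.3}, and your fleshing-out --- replacing Lemma \ref{lem2.5} by the classical Ishikawa--Dowker characterization of countable paracompactness (increasing open cover $(U_n)$ admits closed $F_n\subseteq U_n$ with $\bigcup_n \mathrm{Int}\,F_n=X$) and deleting the order-preservation clauses --- is exactly the intended argument, with all constructions $(\ref{the3.3}.1)$--$(\ref{the3.3}.5)$ transferring verbatim. The one delicate point you flag (deducing $x\in U_n$ from $g_{((U_j))}(x)\leq y_n$, which tacitly uses that $y_m\leq y_n$ forces $m\leq n$ along the chain $(y_i)$) is present in the paper's own proof of Theorem \ref{the3.3} as well, so your treatment matches the paper's.
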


\begin{theorem}
Let $P$ be a bi-bounded complete, dually continuous poset $P$ with a $\ll_{d}$-increasing $\ll$-limit point $y_0$.
Then $X$ is countably metacompact. if and only if
there exists an operators $\Phi$ assigning to each map $g:X\rightarrow P$ which is locally upper bounded about $(y_i)_{i\in\mathbb{N}}$ with a lower bound,
a l.s.c. map $\Phi(g):X\rightarrow P$ such that
$g\leq \Phi(g)\ll y_0$.
\end{theorem}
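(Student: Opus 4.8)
The plan is to transcribe the proof of Theorem \ref{the3.4}, deleting every clause about order-preservation of $\Phi$ and replacing Lemma \ref{lem2.5} by the classical non-monotone companion: $X$ is countably metacompact if and only if to each increasing open cover $(U_j)_{j\in\mathbb{N}}$ of $X$ one can assign closed sets $(F(n))_{n\in\mathbb{N}}$ with $F(n)\subseteq U_n$ and $\bigcup_{n\in\mathbb{N}}F(n)=X$ (this is Ishikawa's characterization, obtained from the proof of Lemma \ref{lem2.5} by dropping condition (3) and applying De Morgan's laws). Let $(y_i)_{i\in\mathbb{N}}$ be the strictly increasing sequence with $y_i\ll_d y_{i+1}$, $y_i\ll y_0$ and $y_0=\bigvee_{i}y_i$ witnessing that $y_0$ is a $\ll_d$-increasing $\ll$-limit point.

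\emph{Necessity.} Assume $X$ is countably metacompact and let $g:X\rightarrow P$ be locally upper bounded about $(y_i)_{i\in\mathbb{N}}$ with a lower bound. Set $U_n(g)=Int\{x\in X:g(x)\leq y_n\}$; these are open, increasing (since $y_n\leq y_{n+1}$), and cover $X$ by local upper boundedness. Choose closed $F(n,(U_j(g)))\subseteq U_n(g)$ with $\bigcup_n F(n,(U_j(g)))=X$, which, after replacing each by $\bigcup_{k\leq n}F(k,(U_j(g)))$, we may assume increasing, and define $\Phi(g)(x)=y_1$ on $F(1,(U_j(g)))$ and $\Phi(g)(x)=y_{n+1}$ on $F(n+1,(U_j(g)))\setminus F(n,(U_j(g)))$; this covers all of $X$. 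As in Theorem \ref{the3.4}: $g\leq\Phi(g)$ because $F(n+1,(U_j(g)))\subseteq U_{n+1}(g)\subseteq\{x\in X:g(x)\leq y_{n+1}\}$; $\Phi(g)\ll y_0$ since every value $y_i$ satisfies $y_i\ll y_0$; and $\Phi(g)$ is l.s.c., because for $x\in F(m+1,(U_j(g)))\setminus F(m,(U_j(g)))$ the open neighborhood $O_x=X\setminus F(m,(U_j(g)))$ has $\bigwedge\Phi(g)(O_x)=y_{m+1}=\Phi(g)(x)$, so by Proposition \ref{prop2.7}(c) the map $\Phi(g)$ admits $\Phi(g)_*(x)$ and $\Phi(g)_*(x)=\Phi(g)(x)$. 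Since no order-preservation is demanded, $\Phi$ may be defined independently on each $g$.

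\emph{Sufficiency.} Given an increasing open cover $(U_j)_{j\in\mathbb{N}}$ of $X$, define $g_{((U_j))}(x)=y_1$ on $U_1$ and $y_{n+1}$ on $U_{n+1}\setminus U_n$; as in Theorem \ref{the3.4} this map is locally upper bounded about $(y_i)_{i\in\mathbb{N}}$ (take $O_x=U_{n+1}$ for $x\in U_{n+1}\setminus U_n$) and has lower bound $y_1$. Applying the hypothesized operator yields an l.s.c.\ map $\Phi(g_{((U_j))})$ with $g_{((U_j))}\leq\Phi(g_{((U_j))})\ll y_0$; put $F(n,(U_j))=\{x\in X:\Phi(g_{((U_j))})(x)\leq y_n\}$, which is closed by Proposition \ref{prop2.7}(a) and Proposition \ref{prop2.8}. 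Then $F(n,(U_j))\subseteq U_n$, since $\Phi(g_{((U_j))})(x)\leq y_n$ forces $g_{((U_j))}(x)\leq y_n$, hence $x\in U_m\subseteq U_n$ with $m\leq n$, exactly as in Theorem \ref{the3.3}; and $\bigcup_n F(n,(U_j))=X$, because for each $x$ we have $\Phi(g_{((U_j))})(x)\ll y_0=\bigvee_i y_i$ with $\{y_i\}$ directed, so some $y_i$ dominates $\Phi(g_{((U_j))})(x)$, i.e.\ $x\in F(i,(U_j))$. By the characterization recalled above, $X$ is countably metacompact.

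The only slightly delicate point is identifying the correct non-monotone counterpart of Lemma \ref{lem2.5}; everything else is a word-for-word copy of the reasoning in Theorems \ref{the3.3} and \ref{the3.4} with the monotonicity clauses struck out, and in particular the key use of $\Phi(g)\ll y_0$ to obtain $\bigcup_n F(n,(U_j))=X$ is identical to the corresponding step already performed in the proof of Theorem \ref{the3.4}.
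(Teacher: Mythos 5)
Your proof is correct and is essentially the argument the paper intends: it explicitly states that Theorem 4.2 "follows in the same way as" Theorem 3.4, and you carry this out by substituting the classical (non-monotone) Ishikawa-type characterization of countable metacompactness for Lemma \ref{lem2.5} and deleting the order-preservation clauses, with the key step $\Phi(g)\ll y_0=\bigvee_i y_i\Rightarrow x\in F(i,(U_j))$ handled exactly as in Theorem \ref{the3.4}. No further comment is needed.
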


Recall that the stratifiable (semi-stratifiable)  spaces is the monotone versions of the perfectly normal (perfect) spaces.
We get the similar results for perfectly normal (perfect) spaces as follows.

\begin{theorem}
Let $P$ be a bi-bounded complete, continuous poset $P$ with a $\ll_{d}$-increasing $\ll$-limit point $y_0$.
Then $X$ is perfectly normal if and only if
there exists an operators $\Phi$ assigning to each map $g:X\rightarrow P$ with $F_g\neq\emptyset$,
a l.s.c. map $\Phi(g):X\rightarrow P$ such that
$g\leq \Phi(g)$ and that $\Phi(g)$ is locally upper bounded about $(y_i)_{i\in\mathbb{N}}$ at each $x\in U_g$.
\end{theorem}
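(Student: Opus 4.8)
The plan is to mimic the proof of Theorem \ref{the3.1} verbatim, replacing the appeal to Lemma \ref{lem2.2} by an appeal to Lemma \ref{the2.4}, and simply discarding everywhere the clauses concerning order-preservation of the operators (which is precisely what distinguishes the perfectly normal case from the stratifiable one, just as condition (iii)/(3) distinguishes Lemma \ref{lem2.2} from Lemma \ref{the2.4}).

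For necessity, assume $X$ is perfectly normal and fix an operator $F$ as in Lemma \ref{the2.4}, that is, one assigning to each increasing sequence of open sets an increasing sequence of closed sets satisfying (i) and (ii)'. Given a map $g:X\to P$ with $F_g\neq\emptyset$, set $U_n(g)=Int\{x\in X:g(x)\leq y_n\}$ as in $(\ref{the3.1}.1)$; the identity $U_g=\bigcup_{n\in\mathbb{N}}U_n(g)$ and the monotonicity of $(U_n(g))_n$ are established exactly as in Theorem \ref{the3.1}. Feeding $(U_j(g))_j$ into $F$ yields closed sets $F(n,(U_j(g)))$ with $F(n,(U_j(g)))\subseteq U_n(g)$, $\bigcup_n Int\,F(n,(U_j(g)))=\bigcup_n U_n(g)$, and $F(n,(U_j(g)))\subseteq F(n+1,(U_j(g)))$. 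Define $\Phi(g)$ by the three-case formula $(\ref{the3.1}.5)$. The verifications that $g\leq\Phi(g)$, that $\Phi(g)$ is locally upper bounded about $(y_i)_{i\in\mathbb{N}}$ at each $x\in U_g$ (using $\bigcup_n Int\,F(n,(U_j(g)))=U_g$ together with the neighborhood $O_x=Int\,F(n+1,(U_j(g)))$), and that $\Phi(g)$ is l.s.c.\ (via the neighborhoods $X\setminus F(m,(U_j(g)))$ and $X\setminus\bigcup_n F(n,(U_j(g)))$ and part (c) of Proposition \ref{prop2.7}) are word-for-word those of Theorem \ref{the3.1}. The single thing we delete is the last paragraph of that proof, which checks $\Phi(g)\leq\Phi(g')$, since no such requirement appears here.

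For sufficiency, given the operator $\Phi$ and an increasing sequence $(U_j)_j$ of open subsets of $X$, define $g_{((U_j))}$ by $(\ref{the3.1}.6)$, note $U_{g_{((U_j))}}=\bigcup_{n\in\mathbb{N}}U_n$, apply $\Phi$, and put $F(n,(U_j))=\{x\in X:\Phi(g_{((U_j))})(x)\leq y_n\}$ as in $(\ref{the3.1}.7)$. By (a) of Proposition \ref{prop2.7} and Proposition \ref{prop2.8}, each $F(n,(U_j))$ is closed, and the increasing nature of $\Phi(g_{((U_j))})$'s sublevel sets makes $(F(n,(U_j)))_n$ increasing. The arguments of Theorem \ref{the3.1} then give $U_n\supseteq F(n,(U_j))$, so (i) of Lemma \ref{the2.4} holds, and, using that $\Phi(g_{((U_j))})$ is locally upper bounded about $(y_i)_{i\in\mathbb{N}}$ at each $x\in U_{g_{((U_j))}}$, they give $\bigcup_n Int\,F(n,(U_j))=\bigcup_n U_n$, so (ii)' holds. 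By Lemma \ref{the2.4}, $X$ is perfectly normal.

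There is essentially no obstacle here: every estimate needed has already been carried out in Theorem \ref{the3.1}, and the transition from stratifiable to perfectly normal is exactly the transition from Lemma \ref{lem2.2} to Lemma \ref{the2.4}. The only point worth a line of care is that the three-case definition $(\ref{the3.1}.5)$ of $\Phi(g)$ stays well posed, which requires $(F(n,(U_j(g))))_n$ to be increasing — and this is guaranteed because Lemma \ref{the2.4} produces an \emph{increasing} sequence of closed sets, not merely an arbitrary one.
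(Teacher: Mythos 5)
Your proposal is correct and matches the paper's intent exactly: the paper itself disposes of this theorem by remarking that it ``follows in the same way as Theorem \ref{the3.1}'', which is precisely your plan of reusing the construction $(\ref{the3.1}.1)$--$(\ref{the3.1}.7)$ verbatim while swapping Lemma \ref{lem2.2} for Lemma \ref{the2.4} and dropping the order-preservation clause. Your closing observation that well-posedness of $(\ref{the3.1}.5)$ still rests on the monotonicity of $(F(n,(U_j(g))))_n$ is a worthwhile check that the paper leaves implicit.
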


\begin{theorem}
Let $P$ be a bi-bounded complete, continuous poset $P$ with a $\ll_{d}$-increasing $\ll$-limit point $y_0$.
Then $X$ is perfect if and only if
there exists an operators $\Phi$ assigning to each map $g:X\rightarrow P$ with an upper bound $y_0$,
a l.s.c. map $\Phi(g):X\rightarrow P$ such that
$g\leq \Phi(g)$  and that $\Phi(g)$ is upper bounded at each $x\in U_g$.
\end{theorem}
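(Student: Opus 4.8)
The plan is to reproduce the proof of Theorem~\ref{the3.2} almost verbatim, discarding every clause that refers to the order-preserving property of $\Phi$ and replacing the use of Lemma~\ref{lem2.2} by Lemma~\ref{the2.4}. One preliminary observation will be used throughout: the defining sequence $(y_i)_{i\in\mathbb{N}}$ of the $\ll_d$-increasing $\ll$-limit point $y_0$ satisfies $y_i\ll_d y_{i+1}$, hence $y_i\leq y_{i+1}$ by Lemma~\ref{lem2.6}(1); so $(y_i)$ is increasing, which is exactly what makes all the families of sets below increasing in the sense demanded by Lemma~\ref{the2.4}.

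For necessity, assume $X$ is perfect and fix an operator $F$ as in Lemma~\ref{the2.4}, assigning to each increasing sequence $(U_j)$ of open sets an increasing sequence $(F(n,(U_j)))$ of closed sets with $F(n,(U_j))\subseteq U_n$ for all $n$ and $\bigcup_n F(n,(U_j))=\bigcup_n U_n$. Given $g:X\rightarrow P$ with $g\leq y_0$, set $U_n(g)=Int\{x\in X:g(x)\leq y_n\}$ as in $(\ref{the3.1}.1)$; this is an increasing sequence of open sets and, just as in Theorem~\ref{the3.1}, $U_g=\bigcup_n U_n(g)$. Apply $F$ to $(U_j(g))$ and define $\Phi(g)$ by the three-case formula $(\ref{the3.1}.5)$, taking the value $y_1$ on $F(1,(U_j(g)))$, the value $y_{n+1}$ on $F(n+1,(U_j(g)))\setminus F(n,(U_j(g)))$, and $y_0$ off $\bigcup_n F(n,(U_j(g)))$. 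The verifications that $g\leq\Phi(g)$, that $\Phi(g)$ is l.s.c. (using Proposition~\ref{prop2.7}(c) to see that $\Phi(g)$ admits $\Phi(g)_*$ on the relevant neighbourhoods), and that $\Phi(g)$ is upper bounded at each $x\in U_g$ are word for word those in Theorems~\ref{the3.1} and~\ref{the3.2}; the last one is where condition (ii) of Lemma~\ref{the2.4} enters, giving $x\in\bigcup_n F(n,(U_j(g)))$ and hence $\Phi(g)(x)=y_k$ for some $k\in\mathbb{N}$, which is precisely upper boundedness of $\Phi(g)$ at $x$. Since no monotonicity of $\Phi$ is asserted, nothing further is needed.

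For sufficiency, given an increasing sequence $(U_j)$ of open subsets of $X$, define $g_{((U_j))}:X\rightarrow P$ by $(\ref{the3.1}.6)$; it has $y_0$ as an upper bound and $U_{g_{((U_j))}}=\bigcup_n U_n$. Applying the hypothesised operator $\Phi$ yields a l.s.c. map $\Phi(g_{((U_j))})$ with $g_{((U_j))}\leq\Phi(g_{((U_j))})$ that is upper bounded at each $x\in U_{g_{((U_j))}}$. Put $F(n,(U_j))=\{x\in X:\Phi(g_{((U_j))})(x)\leq y_n\}$ as in $(\ref{the3.1}.7)$; by Proposition~\ref{prop2.7}(a) and Proposition~\ref{prop2.8} each $F(n,(U_j))$ is closed, and since $(y_n)$ is increasing the sequence $(F(n,(U_j)))$ is increasing. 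The inclusion $F(n,(U_j))\subseteq U_n$ is obtained exactly as in Theorem~\ref{the3.1}: $\Phi(g_{((U_j))})(x)\leq y_n$ forces $g_{((U_j))}(x)\leq y_n$, hence $x\in U_m$ for some $m\leq n$. For the remaining equality, if $x\in\bigcup_n U_n=U_{g_{((U_j))}}$ then $\Phi(g_{((U_j))})$ is upper bounded at $x$, so $\Phi(g_{((U_j))})(x)\leq y_{n_0}$ for some $n_0$, i.e. $x\in F(n_0,(U_j))$; thus $\bigcup_n F(n,(U_j))=\bigcup_n U_n$. Therefore $F$ witnesses, via Lemma~\ref{the2.4}, that $X$ is perfect.

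I do not expect a genuine obstacle here: the construction and all nontrivial verifications (that $U_g=\bigcup_n U_n(g)$, that the three-case map is well defined and l.s.c., and that the level sets $F(n,(U_j))$ are closed) are already carried out in Theorems~\ref{the3.1}--\ref{the3.2}, and the only new point is the harmless observation that the sequence $(y_i)$ is increasing, which supplies the monotone nesting of sets that Lemma~\ref{the2.4} requires in place of the order-preservation clause (iii) used in the semi-stratifiable case.
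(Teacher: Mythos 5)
Your proposal is correct and follows exactly the route the paper intends: the paper itself only says that Theorem 4.4 ``follows in the same way as'' Theorem \ref{the3.2}, i.e.\ one repeats that argument with Lemma \ref{the2.4} in place of Lemma \ref{lem2.2} and with the order-preservation clause for $\Phi$ deleted, which is precisely what you do. Your added remark that $y_i\ll_d y_{i+1}$ forces $y_i\leq y_{i+1}$ (Lemma \ref{lem2.6}(1)), so that the families $U_n(g)$ and $F(n,(U_j))$ are genuinely increasing, is a correct and worthwhile explicit justification of a point the paper leaves implicit.
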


The proofs of Theorem 4.1, 4.2, 4.3 and 4.4 follow in the same way as Theorem 3.1, 3.2, 3.3 and 3.4.

T. Kubiak \cite[Theorem 2.5]{TK} and E. Lane and C. Pan (see \cite{LN} ) gave the characterizations of monotonically normal spaces by monotone insertions of real-valued functions.
From viewpoints of Theorem \ref{the3.3}, it is natural to ask about monotone
poset-valued insertions on monotonically normal and monotonically countably paracompact spaces.

\begin{definition}\cite{Yama}
A poset $P$ endowed with a topology is called sup-continuous if $\bigvee: J_2\rightarrow P$ is continuous, where $J_2=\{\langle x,y\rangle\in P\times P:\exists x\vee y\in P\}$ is endowed with the subspace topology of the product space $P \times P$.
Dually,
a poset $P$ endowed with a topology is called inf-continuous if $\bigwedge:M_2\rightarrow P$ is continuous, where $M_2=\{\langle x,y\rangle\in P\times P:\exists x\wedge y\in P\}$ is endowed with the subspace topology of the product space $P \times P$.
\end{definition}
A topological poset is a sup-and inf-continuous poset.
Now, define $J_1=M_1=P$ and $J_n=\{\langle x_1,x_2,\cdots, x_n\rangle\in P^n:\exists \bigvee_{i=1}^nx_i\in P\}$ and $M_n=\{\langle x_1,x_2,\cdots,x_n \rangle\in P^n:\exists \bigwedge_{i=1}^nx_i\in P\}$, endowed with the subspace topology of $P^n$, for each $n\in\mathbb{N}$ with $n\geq3$.

\begin{proposition}\label{prop4.6}\cite{Yama}
If $P$ is an upper-bounded (resp. lower-bounded) complete and sup-continuous (resp. inf-continuous) poset, then $\bigvee:J_n\rightarrow P$ (resp. $\bigwedge:M_n\rightarrow P$) is continuous for each $n\in\mathbb{N}$.
\end{proposition}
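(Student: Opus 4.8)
The plan is to prove the $\bigvee$ statement by induction on $n$, expressing the $(n+1)$-ary supremum as an iterated binary one and invoking the $n=2$ case (sup-continuity) together with upper-bounded completeness to guarantee that the intermediate joins actually live in $P$. The $\bigwedge$ statement then comes for free by applying the $\bigvee$ statement to $P^{op}$: reversing the order turns an inf-continuous, lower-bounded complete poset into a sup-continuous, upper-bounded complete one, carries $M_n$ onto $J_n$, and leaves the topology unchanged.

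For the base of the induction, $\bigvee : J_1 \to P$ is the identity map and hence continuous, and $\bigvee : J_2 \to P$ is continuous precisely because $P$ is assumed sup-continuous.

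For the inductive step I would assume $\bigvee : J_n \to P$ is continuous for some $n \geq 2$ and fix $\langle x_1,\dots,x_{n+1}\rangle \in J_{n+1}$, so that $s := \bigvee_{i=1}^{n+1} x_i$ exists. First I observe that $s$ is an upper bound of $\{x_1,\dots,x_n\}$, so by upper-bounded completeness $a := \bigvee_{i=1}^{n} x_i$ exists; then $s$ also bounds $\{a, x_{n+1}\}$, so $a \vee x_{n+1}$ exists, and a routine check of the universal property of the supremum gives $a \vee x_{n+1} = s$. Consequently the projection $\pi : J_{n+1} \to J_n$, $\langle x_1,\dots,x_{n+1}\rangle \mapsto \langle x_1,\dots,x_n\rangle$, is well defined and continuous (it is the restriction of a coordinate projection $P^{n+1}\to P^n$), and the map
\[
\psi : J_{n+1} \longrightarrow J_2, \qquad \langle x_1,\dots,x_{n+1}\rangle \longmapsto \Bigl\langle \bigvee_{i=1}^{n} x_i,\ x_{n+1}\Bigr\rangle
\]
has continuous first coordinate $\bigvee\circ\pi$ (the inductive hypothesis composed with $\pi$) and continuous second coordinate (the last projection), so it is continuous as a map into $P\times P$; since its image lies in $J_2$, which carries the subspace topology, $\psi$ is continuous into $J_2$. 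Composing with the continuous map $\bigvee : J_2 \to P$ then yields that $\langle x_1,\dots,x_{n+1}\rangle \mapsto a\vee x_{n+1} = \bigvee_{i=1}^{n+1} x_i$ is continuous, i.e. $\bigvee : J_{n+1}\to P$ is continuous, which closes the induction.

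The only delicate point — and the one I would write out most carefully — is the bookkeeping that every supremum appearing in the factorization (namely $\bigvee_{i=1}^n x_i$ and $a\vee x_{n+1}$) genuinely exists in $P$, so that $\pi$ and $\psi$ really do take values in $J_n$ and $J_2$ respectively; this is exactly where upper-bounded completeness is used, and it is what legitimizes the associativity rearrangement $\bigvee_{i=1}^{n+1}x_i = \bigl(\bigvee_{i=1}^{n} x_i\bigr)\vee x_{n+1}$ underlying the whole argument.
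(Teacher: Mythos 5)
Your proof is correct: the induction on $n$, factoring the $(n+1)$-ary join through $J_n\times P\to J_2\xrightarrow{\ \vee\ }P$ and using upper-bounded completeness to certify that the intermediate joins $\bigvee_{i=1}^n x_i$ and $\bigl(\bigvee_{i=1}^n x_i\bigr)\vee x_{n+1}=\bigvee_{i=1}^{n+1}x_i$ exist (so that $\pi$ and $\psi$ genuinely land in $J_n$ and $J_2$), is exactly the right argument, and the dual statement does follow by passing to $P^{op}$. Note that the paper itself gives no proof of this proposition -- it is quoted from Yamazaki's paper [Yama] -- so there is nothing to compare against here; your argument is the natural one and fills that gap cleanly.
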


\begin{theorem}\label{the4.7}
Let $X$ be monotonically normal and monotonically countably paracompact and $P$ be a bi-bounded complete, dually continuous poset with a $\ll_{d}$-increasing $\ll$-limit point $y_0$
such that $y_i\ll_d y_{i+1}$, $y_i\ll y_0$ $(i\in \mathbb{N})$ and $y_0=\bigvee_{i\in \mathbb{N}}y_i$.
Let $g:X\rightarrow P$ be an u.s.c. map with a lower bound $\perp g$.
Assume that $g\leq y_0$, and $\langle g, y_0\rangle$ has interpolated points on $\{y_n; n\in \mathbb{N}\}$
such that $\langle g(x),y_0\rangle$ has interpolated points $y_{j}(x)$  and $y_{k}(x)$ on $\{y_n; n\in \mathbb{N}\}$ with $y_{j}(x)\leq y_{k}(x)$ and a monotone increasing path $\varphi_x:[0,1]\rightarrow [y_j(x),y_0]$
from some lower point of $y_k(x)$ for each $x\in X$ to $y_0$.
Then, there exist an operator $\Phi$ assigning to each u.s.c. map $g:X\rightarrow P$,
a continuous map $\Phi(g):X\rightarrow P$ such that
$g(x)\ll_{d}\Phi(g)(x)\ll y_0$ for each $x\in X$ and that
$\Phi(g)\geq \Phi(g')$ whenever $g\leq g'$.
\end{theorem}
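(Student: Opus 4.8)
The strategy is to transplant to the poset-valued setting the monotone insertion technique for real-valued functions on monotonically normal, monotonically countably paracompact spaces (Kubiak \cite{TK}; Lane--Pan, see \cite{LN}): from the u.s.c.\ map $g$ extract a monotone increasing open cover, shrink it monotonically using $\mathrm{MCP}$, fatten the shrinking monotonically using monotone normality into a continuous real-valued ``scale'', and finally determine $\Phi(g)(x)$ as the point of $P$ at the scale-level $u_{g}(x)$, with the way-above relation $\ll_{d}$ carrying the order data throughout and the path $\varphi_{x}$ certifying that the required point of $P$ exists.

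\emph{Step 1 (a countable open cover).} Since $g$ has a lower bound and $g\leq y_{0}$, Proposition \ref{prop2.7}(d) gives that $g$ admits $g^{*}$ and $\mathcal N^{*}_{x}(g)\neq\emptyset$ for all $x$; then, $P$ being dually continuous, Proposition \ref{prop2.9} shows $V_{n}(g):=\{x\in X:g(x)\ll_{d}y_{n}\}$ is open for each $n\in\mathbb{N}$. By $y_{n}\ll_{d}y_{n+1}$ and Lemma \ref{lem2.6}(2) (applied to $g(x)\leq g(x)\ll_{d}y_{n}\leq y_{n+1}$), $(V_{n}(g))_{n}$ increases; and since $\langle g(x),y_{0}\rangle$ has an interpolated point $y_{j}(x)\in\{y_{n}\}$, $g(x)\ll_{d}y_{j}(x)$, so $x\in V_{j}(x)$ and $\bigcup_{n}V_{n}(g)=X$. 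Moreover $g\leq g'$ forces $V_{n}(g)\supseteq V_{n}(g')$ for all $n$, again by Lemma \ref{lem2.6}(2).

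\emph{Step 2 (the scale, the map, and monotonicity).} Apply the operator of Lemma \ref{lem2.5} in its $\mathrm{MCP}$ form to $(V_{j}(g))_{j}$, obtaining a monotone assignment of closed sets contained in the $V_{n}(g)$ whose interiors cover $X$; alternating these recursively with a monotone normality operator for $X$ (monotone normality implies normality, and together with $\mathrm{MCP}$ countable paracompactness), build a monotone increasing open cover $(O_{n}(g))_{n}$ with $\overline{O_{n}(g)}\subseteq O_{n+1}(g)$ and $\overline{O_{n}(g)}\subseteq V_{n}(g)$, and then, by dyadic interpolation between consecutive terms, a monotone family $\{G_{r}(g):r\in\mathbb{Q}\cap[0,1]\}$ with $G_{0}(g)=\emptyset$, $G_{1}(g)=X$, $\overline{G_{r}(g)}\subseteq G_{s}(g)$ for $r<s$, $\bigcup_{r<1}G_{r}(g)=X$, and levels $\rho_{n}\uparrow1$ with $G_{\rho_{n}}(g)=O_{n}(g)$. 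Then $u_{g}(x):=\inf\{r:x\in G_{r}(g)\}$ is a continuous map $X\to[0,1)$ with $u_{g}(x)<\rho_{n}\Rightarrow x\in O_{n}(g)\subseteq V_{n}(g)\Rightarrow g(x)\ll_{d}y_{n}$. Define $\Phi(g)$ so that $y_{n}\leq\Phi(g)(x)\leq y_{n+1}$ whenever $u_{g}(x)\in[\rho_{n-1},\rho_{n})$ (with $\rho_{0}:=0$): morally $\Phi(g)(x)=\varphi_{x}(u_{g}(x))$, but in fact (see below) $\Phi(g)(x)$ is assembled from the \emph{fixed} points $y_{n}$ and the scale $\{G_{r}(g)\}$, so that it depends on $g$ only through the scale. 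The two way-relations are then immediate: if $u_{g}(x)\in[\rho_{n-1},\rho_{n})$ then $g(x)\ll_{d}y_{n}\leq\Phi(g)(x)$, so $g(x)\ll_{d}\Phi(g)(x)$ by Lemma \ref{lem2.6}(2), and $\Phi(g)(x)\leq y_{n+1}\ll y_{0}$, so $\Phi(g)(x)\ll y_{0}$. For the monotonicity of $\Phi$: $g\leq g'$ forces $V_{n}(g)\supseteq V_{n}(g')$ (Step 1), hence $O_{n}(g)\supseteq O_{n}(g')$ and $G_{r}(g)\supseteq G_{r}(g')$ for all $n,r$ (the operator of Lemma \ref{lem2.5} and the monotone normality operator being order-preserving in their data), and tracking this through the construction of $\Phi(g)$ yields the stated $\Phi(g)\geq\Phi(g')$.

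\emph{The main obstacle} is the continuity of $\Phi(g)$: the curves $\varphi_{x}$ are prescribed only pointwise, with no joint continuity of $(x,t)\mapsto\varphi_{x}(t)$, so $\Phi(g)$ cannot be taken literally to be $\varphi_{(\cdot)}\circ u_{g}$. Instead $\Phi(g)$ must be assembled from the dyadic scale $\{G_{r}(g)\}$ and the fixed points $y_{n}$: between consecutive levels one interpolates in the $\ll_{d}$-sense (Lemma \ref{lem2.6}(4), using $y_{n}\ll_{d}y_{n+1}$), and, $P$ being bi-bounded complete so that $\bigvee$ and $\bigwedge$ are continuous wherever they are defined (Proposition \ref{prop4.6}), one writes $\Phi(g)$ locally --- on each closed shell $\overline{O_{n+1}(g)}\setminus O_{n-1}(g)$, where only finitely many levels are relevant --- as a finite join/meet of continuous pieces governed by the monotone normality operator, the role of the path $\varphi_{x}$ being only to certify that, for each $x$, a point of $P$ lying $\ll_{d}$-above $g(x)$, $\ll$-below $y_{0}$, and compatible with those finitely many ``corner'' values exists. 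Carrying out this glueing (and pinning down the exact direction of the monotonicity) is the part requiring genuine care; the remainder runs parallel to the proofs of Theorems \ref{the3.1} and \ref{the3.3}.
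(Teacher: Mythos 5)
Your Step 1 matches the paper's opening move exactly (the open sets $\{x:g(x)\ll_d y_n\}$, their monotonicity in $g$, and the covering property from the interpolated points), but from there the proposal diverges and, crucially, stops exactly where the real work begins. You compress the construction into a single continuous real-valued scale $u_g:X\to[0,1)$ and then must convert a real number back into a point of $P$; you correctly observe that $\varphi_{(\cdot)}\circ u_g$ is not available because the paths are only pointwise, but the replacement you offer --- ``assembled from the dyadic scale and the fixed points $y_n$ \dots as a finite join/meet of continuous pieces'' --- is never actually defined. Lemma \ref{lem2.6}(4) only produces one interpolating \emph{point} between $y_n$ and $y_{n+1}$; it does not produce a continuous $P$-valued transition across a shell, so the gluing you defer to is precisely the unproved step. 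As written, $\Phi(g)(x)$ has no formula, and neither its continuity nor the direction of its monotonicity can be checked.

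The paper's proof supplies the missing idea, and it is worth seeing how it sidesteps your obstacle. It never forms a single scale $u_g$. Instead it iterates the MCP operator of Lemma \ref{lem2.5} three times to get nested closed shrinkings $L_n\subset \mathrm{Int}\,E_n$, $E_n\subset \mathrm{Int}\,F_n$, $F_n\subset U_n$, and takes the set differences $H_n=U_n\setminus L_{n-1}$, $G_n=\mathrm{Int}\,F_n\setminus E_{n-1}$ to obtain \emph{locally finite} open covers with $\overline{G_n}\subset H_n\subset U_n$. Monotone normality then gives, for each level $n$ separately, a monotone Urysohn function $j_n$ vanishing on $\overline{G_n}$ and equal to $1$ off $H_n$; composing with the hypothesis's path $\varphi_n:[0,1]\to[y_n,y_0]$ (read per level, not per point) gives continuous $k_n=\varphi_n\circ j_n$, and one sets $\Phi(g)=\bigwedge_n k_n$. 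Well-definedness uses bi-bounded completeness ($g(x)$ is a lower bound of the relevant $k_n(x)$), the relations $g\ll_d\Phi(g)\ll y_0$ fall out of $H_n\subset U_n$ and $k_{n'}(x)=t_{n'}\leq y_{n'}$ for $x\in G_{n'}$, and --- this is the point your sketch cannot reach --- continuity is free because local finiteness makes the infimum locally a finite meet, continuous by Proposition \ref{prop4.6}. So the path hypotheses are not a mere existence certificate: they are the continuous ingredient, and the locally finite covers are what let countably many of them be combined without losing continuity. To repair your argument you would essentially have to reintroduce both of these devices.
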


\begin{proof}
There exists an operator $F$ satisfying (1), (2)' and (3) in Lemma \ref{lem2.5}.
Let $g:X\rightarrow P$ be a u.s.c. map where $g\leq y_0$, and $\langle g, y_0\rangle$ has interpolated points on $\{y_n: n\in \mathbb{N}\}$.
For each $n\in \mathbb{N}$, we define
$$ U_n=\{x\in X: g(x)\ll_dy_n\}  \quad\quad \quad\quad(\ref{the4.7}.1)$$
Then, $\{U_n:n\in\mathbb{N}\}$ is a increasing sequence of open subsets of $X$ because of Proposition \ref{prop2.9}.
It is clear that $\bigcup_{n\in \mathbb{N}}U_n=X$.
In fact, for each $x\in X$, there exists $n\in\mathbb{N}$ such that $g(x)\ll_d y_n\ll y_0$.
It provides that $x\in U_n$.

Hence, $F((U_j))=(F_n)_{n\in\mathbb{N}}$ is a sequence of closed subsets of $X$ such that
$$F_n\subset U_n \text{~for each~} n\in\mathbb{N};$$
$$\bigcup_{n\in\mathbb{N}}Int F_n=X;$$
$$F_n\subset F_{n+1}, n\in\mathbb{N};$$
Similarly, $F((Int F_{n}))=(E_n)_{n\in\mathbb{N}}$ is a sequence of closed subsets of $X$ such that
$$E_n\subset Int F_{n} \text{~for each~} n\in\mathbb{N};$$
$$\bigcup_{n\in \mathbb{N}}IntE_n=\bigcup_{n\in\mathbb{N}}Int F(n, (U_j))=X;$$
$$E_n\subset E_{n+1}, n\in\mathbb{N};$$
And $F((Int E_{n}))=(L_n)_{n\in\mathbb{N}}$ is a sequence of closed subsets of $X$ such that
$$L_n\subset Int E_{n} \text{~for each~} n\in\mathbb{N};$$
$$\bigcup_{n\in \mathbb{N}}IntL_n=\bigcup_{n\in\mathbb{N}}Int E(n, (U_j))=X;$$
$$L_n\subset L_{n+1}, n\in\mathbb{N};$$
Let $H_n=U_n\setminus L_{n-1}$, $G_n=IntF_n\setminus E_{n-1}$ for each $n\in \mathbb{N}$ and $H_1=U_1$, $G_1=IntF_1$.
It is obvious that $\{H_n:n\in\mathbb{N}\}$ and $\{G_n:n\in\mathbb{N}\}$ are locally finite open covers of $X$ such that $\overline{G_n}\subset H_n\subset U_n$ for each $n\in\mathbb{N}$.

Since $x$ is monotone normal, take a continuous function $j_n:X\rightarrow [0,1]$ by
$$j_n(x)=\left\{
\begin{array}{rcl}
0     &      & {x\in \overline{G_n}}\\
1    &      & {x\in X\setminus H_n}\\
\end{array} \right. \text{~for each~} n\in\mathbb{N}.\quad\quad \quad\quad(\ref{the4.7}.2)$$

For each $n\in\mathbb{N}$, there exists $t_n\in P$ and a continuous monotone increasing map $\varphi_n:[0,1]\rightarrow [y_n,y_0]\subset P$ such that
$$ \varphi_n(0)=t_n,  \text{~and~} \varphi_n(1)=y_0, y_n\leq t_n\leq y_{n+1}.\quad\quad \quad\quad(\ref{the4.7}.3)$$
Define a continuous map $k_n:X\rightarrow[y_n,y_0]\subset P$ by
$$k_n=\varphi_n\circ j_n \text{~for each~} n\in\mathbb{N}.\quad\quad \quad\quad(\ref{the4.7}.4)$$
Thus, we can define a continuous map $\Phi(g):X\rightarrow P$ as follows:
$$\Phi(g)(x)=\bigwedge_{n\in\mathbb{N}}k_n(x)\quad\quad \quad\quad(\ref{the4.7}.5)$$
for each $x\in X$. Now, to show that $\Phi(g)$ is defined, we set $\delta_x=\{n\in \mathbb{N}:x\in H_n\}$ for each $x\in X$.
For each $n\notin \delta_x$, $k_n(x)=\varphi_n\circ j_n(x)=\varphi_n(1)=y_0$.
For each $n\in \delta_x$, $k_n(x)=\varphi_n\circ j_n(x)\geq y_n$ because of the range of $\varphi_n$ is $[y_n,y_0]$.
It follows from $H_n\subset U_n$ that
$$g(x)\ll_dy_n\leq k_n(x) \text{~for each~} n\in \delta_x.$$
This shows that $\{k_n(x):n\in \delta_x\}$ and $\{y_n:n\in \delta_x\}$ have an lower bound $g(x)$, thus $\bigwedge_{n\in\delta_x}k_n(x)$ and $\bigwedge_{n\in\delta_x}y_n$ exist.
Hence, by (1) of \cite[Proposition I-1.2 (3)]{Gi}, we obtain that
$$g(x)\ll_d\bigwedge_{n\in\delta_x}y_n\leq\bigwedge_{n\in\delta_x}k_n(x)=\bigwedge_{n\in\delta_x}k_n(x)\wedge y_0=\Phi(g)(x).$$
Therefore, $\Phi(g)$ is defined, and we also have $g\ll_d \Phi(g)$.

Next, to show $\Phi(g)\ll y_0$, let $x\in X$.
Since $\{G_n:n\in\mathbb{N}\}$ is a cover of $X$, take $n'\in\mathbb{N}$ so as to satisfy $x\in G_{n'}$.
Then, it follows from $G_{n'}\subset U_{n'}$ that
$$\Phi(g)(x)\leq k_{n'}(x)=\varphi_{n'}\circ j_{n'}(x)=\varphi_{n'}(0)=t(n')\leq y_{n'}\ll y_0.$$
Thus, $\Phi(g)\ll y_0$ holds, because of (2) of Lemma \ref{lem2.6}.

Finally, to show that $\Phi(g)$ is continuous. Let $x\in X$, and take a neighborhood  $O_x$ of $x$ and a finite subset $\delta'_x$ of $\mathbb{N}$ such that $O_x\bigcap H_n\neq \emptyset$ for each $n\in \delta'_x$.
Then, $\delta_y\subset \delta'_x$ for each $y\in\delta_y$.
Since $\Phi(g)(y)$ can be re-expressed as $\Phi(g)(y)=\bigwedge_{n\in\delta'_x}k_n(y)\wedge y_0=\bigwedge_{n\in\delta'_x}k_n(y)$ for each $y\in O_x$, we have that $\langle k_n(y)\rangle_{n\in\delta'_x}\in M_{|\delta'_x|}$ for each $y\in O_x$.
This means $(\Delta_{n\in\delta'_x}k_n)(O_x)\subset M_{|\delta'_x|}$, where $\Delta_{n\in\delta'_x}k_n$ is the diagonal of mappings $\{k_n:n\in\delta'_x\}$.
Hence, on $O_x$, $\Phi(g)$ is the composition $\bigwedge\circ(\Delta_{n\in\delta'_x}k_n)$ of $\Delta_{n\in\delta'_x}k_n$ and $\bigwedge:M_{|\delta'_x|}\rightarrow P$.
By Proposition \ref{prop4.6}, $\Phi(g)$ is continuous at $x$.

Finally, let $g':X\rightarrow P$ be a map with $g\leq g'$.
Then
$$\{x\in X: g'(x)\ll_dy_n\}\subseteq \{x\in X: g(x)\ll_dy_n\}$$
and hence, $U_n\supseteq U'_n$ for each $n\in\mathbb{N}$.
Therefore we have $F_n\supseteq F'_n$, $E_n\supseteq E'_n$ and $L_n\supseteq L'_n$ for each $n\in\mathbb{N}$, and
$$\bigcup_{n\in \mathbb{N}}U'_n=\bigcup_{n\in\mathbb{N}}Int F'_n=\bigcup_{n\in\mathbb{N}}Int E'_n=\bigcup_{n\in\mathbb{N}}Int L'_n$$

For each $x\in X$, there exists $n\in \mathbb{N}$ such that $x\in G'_n=IntF'_n\setminus E'_{n-1}$.
That is $x\in IntF'_n\subseteq IntF_n$.
By (\ref{the4.7}.2), (\ref{the4.7}.3), (\ref{the4.7}.4) and the monotonicity of $\varphi_n(x)$ we can get $k_n(x)\geq t_n=\varphi'_n(0)=\varphi'_n\circ j'_n(x)=k'_n(x)$.
This implies that $\Phi(g)\geq \Phi(g')$ whenever $g\leq g'$.
This completes the proof.
\end{proof}

For a map $f:$ $X\rightarrow P$ and a point $y\in P$, we define
$$U_{f,y}=\{x\in X: \langle f(x),y\rangle \text{~has an interpolated point~}\}$$ and $U_{y,f}=\{x\in X: \langle y,f(x)\rangle\text{~has an interpolated point~}\}$.
We have known that monotonically normal and monotonically countably paracompact spaces are stratifiable, then we have the following question:

\begin{question}
Let $X$ be stratifiable and $P$ be a bi-bounded complete, dually continuous poset with a $\ll_{d}$-increasing $\ll$-limit point $y_0$
such that $y_i\ll_d y_{i+1}$, $y_i\ll y_0$ $(i\in \mathbb{N})$ and $y_0=\bigvee_{i\in \mathbb{N}}y_i$.
Do the following conditions exist?

Let $g:X\rightarrow P$ be an u.s.c. map with a lower bound $\perp g$.
Assume that $g\leq y_0$, and $\langle g, y_0\rangle$ has interpolated points on $\{y_n; n\in \mathbb{N}\}$
such that $\langle g(x),y_0\rangle$ has interpolated points $y_{j}(x)$  and $y_{k}(x)$ on $\{y_n; n\in \mathbb{N}\}$ with $y_{j}(x)\leq y_{k}(x)$ and a monotone increasing path $\varphi_x:[0,1]\rightarrow [y_j(x),y_0]$
from some lower point of $y_k(x)$ for each $x\in X$ to $y_0$.
Then, there exist an operator $\Phi$ assigning to each u.s.c. map $g:X\rightarrow P$,
a continuous map $\Phi(g):X\rightarrow P$ such that $g(x)\leq\Phi(g)(x)\leq y_0$,
$g(x)\ll_{d}\Phi(g)(x)\ll y_0$ for each $x\in U_{g,y_0}$ and that
$\Phi(g)\geq \Phi(g')$ whenever $g\leq g'$.
\end{question}

\vskip0.9cm

\end{document}